\pdfoutput=1 
\documentclass[12pt, reqno]{amsart}

\usepackage[paper=letterpaper, inner=1in, outer=1in, top=1in, bottom=1in]{geometry}

\usepackage{booktabs, multirow} 
\usepackage{makecell} 
\usepackage{subcaption} 
\usepackage[flushmargin]{footmisc} 

\usepackage{mlmodern}
\usepackage[T1]{fontenc}
\usepackage{microtype} 

\usepackage[mathscr]{eucal}
\usepackage{amsfonts, amssymb, amsmath, mathtools}

\usepackage[backend=biber, style=alphabetic, maxbibnames=99, maxalphanames=5, url=false, isbn=false, doi=false]{biblatex}
\usepackage{xcolor}
\usepackage{hyperref}
\hypersetup{
    colorlinks=true,
    linkcolor=blue!50!black,
    urlcolor=teal!70!black,
    citecolor=green!50!black,
    breaklinks=true
}
\usepackage[capitalize,nameinlink,noabbrev,nosort]{cleveref}

\newbibmacro{string+doi}[1]{%
	\iffieldundef{doi}{\iffieldundef{url}{#1}{\href{\thefield{url}}{#1}}}{\href{http://dx.doi.org/\thefield{doi}}{#1}}}	
\DeclareFieldFormat[article]{title}{\usebibmacro{string+doi}{\mkbibquote{#1}}}
\DeclareFieldFormat[thesis]{title}{\usebibmacro{string+doi}{\mkbibemph{#1}}}

\usepackage{amsthm, thmtools}

\theoremstyle{plain}
\newtheorem{theorem}{Theorem}[section]
\newtheorem{lemma}[theorem]{Lemma}
\newtheorem{proposition}[theorem]{Proposition}
\newtheorem{corollary}[theorem]{Corollary}

\theoremstyle{definition}
\newtheorem{definition}[theorem]{Definition}
\newtheorem{example}[theorem]{Example}
\newtheorem{question}[theorem]{Question}

\theoremstyle{remark}
\newtheorem{remark}[theorem]{Remark}

\renewcommand*{\theHtheorem}{\theHsection.\the\value{theorem}}
\renewcommand*{\theHproposition}{\theHsection.\the\value{theorem}}
\renewcommand*{\theHlemma}{\theHsection.\the\value{theorem}}
\renewcommand*{\theHcorollary}{\theHsection.\the\value{theorem}}
\renewcommand*{\theHconjecture}{\theHsection.\the\value{theorem}}
\renewcommand*{\theHdefinition}{\theHsection.\the\value{theorem}}
\renewcommand*{\theHexample}{\theHsection.\the\value{theorem}}
\renewcommand*{\theHquestion}{\theHsection.\the\value{theorem}}
\renewcommand*{\theHremark}{\theHsection.\the\value{theorem}}

\usepackage{tikz}
\usetikzlibrary{calc, patterns, arrows, arrows.meta, decorations.pathreplacing}
\usepackage[centertableaux]{ytableau}
\ytableausetup{boxsize=1em}

\newcommand{\vocab}[1]{\textcolor{blue!50!black}{\emph{#1}}} 

\DeclareSymbolFont{sansops}{T1}{\sfdefault}{m}{n}
\SetSymbolFont{sansops}{bold}{T1}{\sfdefault}{b}{n}
\makeatletter
\renewcommand\operator@font{\mathgroup\symsansops}
\makeatother

\newcommand{\integers}{\mathbb{Z}}
\newcommand{\positives}{\integers_{>0}}
\newcommand{\nonnegatives}{\integers_{\geq 0}}
\newcommand{\rationals}{\mathbb{Q}}

\newcommand{\sym}[1]{\mathscr{S}_{#1}} 
\DeclareMathOperator{\adg}{\widehat{dg}}

\DeclareMathOperator{\Arm}{Arm}

\DeclareMathOperator{\leg}{leg}
\DeclareMathOperator{\arm}{arm}
\DeclareMathOperator{\Fill}{F}
\DeclareMathOperator{\NAF}{NAF}
\DeclareMathOperator{\rev}{rev}
\DeclareMathOperator{\inc}{inc}

\DeclareMathOperator{\content}{content}
\DeclareMathOperator{\twinv}{twinv}

\newcommand{\swap}[2][]{\mathfrak{t}_{#2}\ifx&#1&\else^{(#1)}\fi}
\newcommand{\sswap}[2][]{\mathfrak{t}_{#2}\ifx&#1&\else^{[#1]}\fi}
\newcommand{\esop}[2][]{\tilde\tau_{#2}\ifx&#1&\else^{(#1)}\fi}
\newcommand{\prob}[1]{\operatorname{prob}_{#1}}
\newcommand{\pp}[2][]{\rho_{#2}^{(#1)}}

\newcommand{\wt}[1][]{{\operatorname{wt}\ifx&#1&\else^{(#1)}\fi}}
\newcommand{\wtqt}[1][]{{\operatorname{wt}_{q, t}\ifx&#1&\else^{(#1)}\fi}}
\newcommand{\maj}[1][]{{\operatorname{maj}\ifx&#1&\else^{(#1)}\fi}}
\newcommand{\coinv}[1][]{{\operatorname{coinv}\ifx&#1&\else^{(#1)}\fi}}
\newcommand{\inv}[1][]{{\operatorname{inv}\ifx&#1&\else^{(#1)}\fi}}
\newcommand{\Des}[1][]{{\operatorname{Des}\ifx&#1&\else^{(#1)}\fi}}
\newcommand{\des}[1][]{{\operatorname{des}\ifx&#1&\else^{(#1)}\fi}}
\newcommand{\dg}[1][]{{\operatorname{dg}\ifx&#1&\else^{(#1)}\fi}}

\newcommand{\south}[1]{d(#1)}

\newcommand{\algebra}{A}

\newcommand{\interval}[1]{[#1]}

\usepackage{listofitems}
\newcommand{\composition}[2][]{%
	\def\temp{#2}\ifx\temp\empty%
	\varnothing%
	\else%
	\readlist\thecycle{#2}%
	\foreachitem\i\in\thecycle{\ifnum\icnt=1\else#1\fi\i}%
	\fi%
}

\newcommand{\window}[2][,]{%
	\left[
	\readlist\thecycle{#2}%
	\foreachitem\i\in\thecycle{\ifnum\icnt=1\else#1\fi\i}%
	\right]
}

\newcommand{\ind}[2][]{%
	\readlist\thecycle{#2}%
	\chi_{%
	\foreachitem\i\in\thecycle{\ifnum\icnt=1\else#1\fi\i}%
	}
}

\title[Shape changing identities for permuted-basement Macdonald polynomials]{Shape changing identities for permuted-basement nonsymmetric Macdonald polynomials \\ (Extended Abstract)}

\author{Guilherme Zeus Dantas e Moura}
\address{
    Department of Combinatorics and Optimization\\ 
    University of Waterloo\\
	Canada
}
\email{zeus@guilhermezeus.com, zeus.dantasemoura@uwaterloo.ca}

\author{Olya Mandelshtam}
\address{
    Department of Combinatorics and Optimization\\
    University of Waterloo\\
    Canada
}
\email{omandels@uwaterloo.ca}

\thanks{Both authors were partially supported by NSERC grant RGPIN-2021-02568.}

\keywords{permuted basement Macdonald polynomials, non-attacking tableaux, probabilistic bijection, maj/inv preserving bijection, signed fillings}

\begin{document}

\begin{abstract}
Permuted-basement Macdonald polynomials $E_\alpha^\sigma(\mathbf{x};q,t)$ are nonsymmetric generalizations of symmetric Macdonald polynomials that form a basis for the polynomial ring $\mathbb{Q}(q,t)[\mathbf{x}]$ for each fixed $\sigma$. There are combinatorial formulas for them as generating functions over composition-shaped non-attacking fillings. In this extended abstract, we bijectively prove identities for the relationship between $E_\alpha^\sigma$, $E_\alpha^{\sigma s_i}$, $E_{s_i\alpha}^\sigma$, and $E_{s_i\alpha}^{\sigma s_i}$. These identities correspond to two combinatorial operations on non-attacking fillings: (1) swapping adjacent entries in the \emph{basement}, generalizing a result of Alexandersson (2019), and (2) swapping adjacent parts in the \emph{shape}, which yields a straightening rule for expanding $E_\alpha^\sigma$ in the polynomials $\{E_{s_i\alpha}^\tau\}_\tau$.
\end{abstract}

\maketitle

\section{Introduction}

The \emph{nonsymmetric Macdonald polynomials}
\(E_\alpha(x_1, \ldots, x_n; q, t)\),
indexed by weak compositions \(\alpha=\composition{\alpha_1,\ldots,\alpha_n}\),
were introduced in \cite{Che95,Mac96,Opd95}
as a generalization of the symmetric Macdonald polynomials $P_\lambda$.
They are the monic eigenfunctions of the Cherednik--Dunkl operators
and form a basis of the polynomial ring \(\rationals(q, t)[x_1, \ldots, x_n]\).

The \emph{permuted-basement Macdonald polynomials}
\(E_\alpha^\sigma(x_1, \ldots, x_n; q, t)\),
first defined by \cite{Fer11},
introduce an additional level of indexing,
pairing a weak composition \(\alpha\) with a permutation \(\sigma\in \sym{n}\).
They are obtained from the nonsymmetric Macdonald polynomials
by applying sequences of Demazure--Lusztig operators.
Throughout, we fix $n \in \nonnegatives$ and work in the ring $\rationals(q,t)[\mathbf{x}]$
where $\mathbf{x}=x_1,\ldots,x_n$.
All weak compositions $\alpha$ are assumed to have $n$ parts, and all permutations $\sigma$ lie in $\sym{n}$. For $k \in \nonnegatives$, write $[k]\coloneq\{1,2,\ldots,k\}$.

There is a combinatorial interpretation for \(E_\alpha^\sigma\) as the generating function of non-attacking fillings of shape \(\alpha\) and basement \(\sigma\), due to \cite{Fer11} and as a generalization of \cite{HHL08}.
Our main results are \cref{theorem:main,theorem:three-term},
which we prove using bijective arguments:
for the former, we construct a probabilistic bijection on non-attacking fillings, and for the latter, we construct a pair of bijections on signed fillings.

\begin{theorem} \label{theorem:main}
	Let \(\alpha\) be a composition
	with \(\alpha_i=\alpha_{i+1}\) for some \(i\in\interval{n-1}\).
	Then,
	\begin{equation*}
		E_{\alpha}^{\sigma}(\mathbf{x}; q, t)
		= E_{\alpha}^{\sigma s_i}(\mathbf{x}; q, t).
	\end{equation*}
\end{theorem}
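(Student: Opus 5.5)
We plan to prove \cref{theorem:main} by working with the combinatorial formula of \cite{Fer11}: $E_\alpha^\sigma$ is the generating function $\sum_F \wtqt(F)\, x^F$ over non-attacking fillings $F$ of the diagram $\adg(\alpha)$ with basement $\sigma$. Each filling carries the weight $q^{\maj(F)} t^{\coinv(F)}$ times a product of factors $\frac{1-t}{1-q^{\leg+1}t^{\arm+1}}$ over its non-descent cells. The identity should then follow from a weight-preserving correspondence between fillings with basement $\sigma$ and fillings with basement $\sigma s_i$. Since the weights are rational functions, this correspondence is allowed to be \emph{probabilistic}: each filling $F$ is sent to a formal combination $\sum_G \prob{F\to G}\, G$. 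We require the probabilities out of each $F$ to sum to $1$, and we require $\sum_F \wtqt(F)\prob{F\to G} = \wtqt(G)$ for every $G$. It is also convenient to normalize so that $\sigma_i<\sigma_{i+1}$, that is, so that $\sigma s_i$ has a descent at position $i$ of the basement.

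The plan has three steps.

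First, describe the basic move. Columns $i$ and $i+1$ have equal height $h=\alpha_i=\alpha_{i+1}$, and the basement entries $a=\sigma_i$ and $b=\sigma_{i+1}$ swap. We scan the two columns from the bottom up, row by row, and at each row decide whether to swap the two entries in that row. Because the columns are adjacent with equal height, the non-attacking condition and the triples that involve only these two columns are controlled locally. Triples involving other columns $j$ see columns $i$ and $i+1$ symmetrically, except through their relative order, so the count of $\inv$ triples with third column outside $\{i,i+1\}$ is preserved by any row swap. Arms also coincide: cells in the two columns at the same row have arms differing only in whether the other column is counted, and that difference is compensated when we swap.

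Second, analyze row by row. We track the current "state" $(x,y)$, the pair of entries in row $r$ of columns $i$ and $i+1$, relative to the entries directly below them. This mirrors Alexandersson's argument \cite{Alex19} for the case $\alpha_i=\alpha_{i+1}$ with $t$-weights. In each row there are a few cases, determined by whether the cells are descents and by the relative order of the entries.
\begin{itemize}
\item In the cases where swapping changes nothing in the weight, we swap deterministically.
\item In the remaining case, the two entries stand in the order that creates or destroys a coinversion triple, and one non-descent factor $\frac{1-t}{1-q^{\leg+1}t^{\arm+1}}$ changes its arm by one. Here the filling can either keep or swap the row, and we assign the probabilities $t$-rationally so that the local identity holds. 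This is essentially the Demazure--Lusztig identity $(1-t)/(1-q^ct^{k+1})+\dots$, with the leg preserved because both columns have the same height.
\end{itemize}

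Third, glue the rows. The decisions propagate upward, so the transition probabilities factor as a product over rows, and the global identity follows from the local ones. The topmost row is handled like the others, since equal heights mean no cell lacks a partner.

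The main obstacle will be the second step: verifying that the local case analysis is consistent. A decision in row $r$ changes the "below" entries for row $r+1$, and hence which case row $r+1$ falls into, and the swaps must also keep the filling non-attacking. To handle this, I would first define the state so that it records only the relative order of the two current entries and whether each sits above a descent. Then I would check that the transition matrix in each state is column-stochastic with respect to the weights. This check is a finite computation, using the identity $\frac{1-t}{1-q^{\ell}t^{m+1}}\cdot t + \dots$ to match the arm shift.
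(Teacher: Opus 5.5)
\paragraph{Verdict.} Your overall strategy matches the paper's: a probabilistic bijection built by swapping the entries of columns $i$ and $i+1$ row by row, upward from the basement. Your one-sided condition $\sum_F \wtqt(F)\,\mathrm{prob}(F\to G)=\wtqt(G)$ would suffice, since it follows from the detailed balance the paper proves. The gap is that the proposal stops where the proof begins: you never define the transition probabilities, and the weight bookkeeping you sketch is wrong.

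\paragraph{The outside-triple claim is false.} Triples whose third box lies outside columns $i,i+1$ are not preserved by arbitrary row swaps. Write $a=T(i,r)$, $b=T(i+1,r)$, $c=T(i,r+1)$, $d=T(i+1,r+1)$. Swapping row $r$ but not row $r+1$ recombines the vertical pairs in the two columns. For each of the $A=\arm(i+1,r+1)$ outside boxes $v$ in the common arm, the inversion count changes by $\Delta=\ind{c, a}+\ind{d, b}-\ind{c, b}-\ind{d, a}$, independently of $v$. At the same time $\maj$ changes by $-(\ell+1)\Delta$, where $\ell=\leg(i+1,r+1)$. This interface effect is exactly where $t^{A}$ and $q^{\ell+1}$ enter the probabilities. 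It is also why the paper's moves are initial segments $\sswap[0,h]{i}$: once the swap stops it never resumes. Rows swapped together keep their outside triples, so there is a single interface to compensate.

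\paragraph{Your proposed state is too coarse.} Relative order plus descent status does not suffice, because equalities among $a,b,c,d$ drive everything. Suppose rows $0,\dots,r$ have been swapped.
\begin{itemize}
\item Non-attackingness forces you to stop when $b=c$ and to continue when $b=d$ (including the case $a=c$, $b=d$).
\item With four distinct entries, the choice is forced by preserving the single local triple: continue iff $\ind{c, d, a}=\ind{d, c, b}$. When the rule says stop, one checks $\Delta=0$.
\item The only genuinely random case is $a=c$, $b\neq d$. Continuing moves the non-descent factor of $(i+1,r+1)$ into column $i$, so arm $A$ becomes $A+1$, and changes $\coinv$ by $1-\ind{d, a, b}$. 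Stopping creates a new factor $\frac{1-t}{1-q^{\ell+1}t^{A+2}}$, together with the interface change above. So your description ``one factor changes its arm by one'' covers only half of this case.
\end{itemize}

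\paragraph{What the paper does.} In the random case it sets $\pp[r]{i}(T)=t^{1-\ind{d, a, b}}\frac{1-q^{\ell+1}t^{A+1}}{1-q^{\ell+1}t^{A+2}}$. It then proves $\wtqt(T)\prob{i}(T,U)=\wtqt(U)\prob{i}(U,T)$ for $U=\sswap[0,h]{i}(T)$. The identity splits into continuation ratios $\pp[r]{i}(T)/\pp[r]{i}(U)$ for $r<h$ and a stopping ratio $(1-\pp[h]{i}(T))/(1-\pp[h]{i}(U))$. Each ratio matches the corresponding local change in $\maj$, $\coinv$, and the non-descent factors. These are the missing ingredients, and without them your plan does not yet prove the theorem.
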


\begin{remark}
    \textcite[Theorem~22]{Ale19} proves \cref{theorem:main} for the case where \(\sigma_i = \sigma_{i+1} + 1\).
	We have not found an explicit reference for the result in full generality.
\end{remark}

\pagebreak[2]

\begin{theorem} \label{theorem:three-term}
	Let \(\alpha\) be a composition with \(\alpha_i > \alpha_{i+1}\) for some \(i \in \interval{n-1}\).
	Then, we have
	\begin{equation} \label{eq:three term}
		E_{\alpha}^{\sigma}(\mathbf{x}; q, t)
		=
		E_{s_i \alpha}^{\sigma s_i}(\mathbf{x}; q, t)
		+
		c_{i, \alpha, \sigma}(q, t)
		E_{s_i \alpha}^{\sigma}(\mathbf{x}; q, t),
	\end{equation}
	where
	\begin{equation*}
		c_{i, \alpha, \sigma}(q, t)
		=t^{\twinv(s_i \alpha, \sigma) - \twinv(\alpha, \sigma)}
		\frac{1 - t}{1 - q^{\leg(u) + 1} t^{\arm(u)}} \times \begin{cases}
			1 & \sigma_i>\sigma_{i+1}      \\
			q^{\leg(u) + 1} t^{\arm(u)}
			  & \sigma_i < \sigma_{i+1}\,,\end{cases}
	\end{equation*}
	in which \(u = (i + 1, \alpha_{i+1} + 1) \in \dg(s_i \cdot \alpha)\) and
	\begin{equation*}
		\twinv(\alpha, \sigma) = |\{(k, l) : k < l \text{ and } \alpha_k \geq \alpha_l \text{ and } \sigma_k < \sigma_l\}|.
	\end{equation*}
\end{theorem}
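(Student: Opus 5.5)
The statement that immediately precedes this point is \cref{theorem:three-term}, so that is the one I will prove. \Cref{theorem:main} will be used as an auxiliary tool.

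The plan is algebraic first, with a bijective refinement at the end. It uses the Demazure--Lusztig operators \(T_i\) from which permuted-basement polynomials are built. There are two standard facts, both in the form given by \cite{Fer11} and \cite{Ale19}.

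1. Change of basement: \(E_\alpha^{\sigma s_i}\) is obtained from \(E_{\alpha}^{\sigma}\) by applying \(T_i^{\pm1}\) (suitably normalized). The choice of \(T_i\) or \(T_i^{-1}\) depends on whether \(\sigma_i<\sigma_{i+1}\) or \(\sigma_i>\sigma_{i+1}\), up to a \(t\)-power.
2. Intertwiner relation (\(\sigma=\mathrm{id}\)): for \(\alpha_i>\alpha_{i+1}\), \(E_{\alpha}\) equals \(\bigl(T_i + \tfrac{1-t}{1-q^{\leg(u)+1}t^{\arm(u)}}\bigr)E_{s_i\alpha}\), up to normalization. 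Here \(q^{\leg(u)+1}t^{\arm(u)}\) is the ratio of the Cherednik eigenvalues of \(\alpha\) at positions \(i\) and \(i+1\). This is exactly the arm/leg of the cell \(u=(i+1,\alpha_{i+1}+1)\) in \(\dg(s_i\alpha)\).

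The proof then proceeds in the following steps.

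1. Write \(\sigma\) as a reduced word. Transport the intertwiner relation from the identity basement to basement \(\sigma\) by conjugating with the \(T\)-operators realizing \(\sigma\).
2. Commute those operators past \(T_i\) using the braid relations. On the left-hand side they produce \(E_\alpha^\sigma\).
3. On the right-hand side, identify \(T_i^{\pm1}E_{s_i\alpha}^{\sigma}\) with a combination of \(E_{s_i\alpha}^{\sigma s_i}\) and \(E_{s_i\alpha}^{\sigma}\), using the quadratic relation \((T_i-t)(T_i+1)=0\). Which form applies depends on the relative order of \(\sigma_i,\sigma_{i+1}\).
4. Read off the coefficient of \(E_{s_i\alpha}^{\sigma}\). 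In the case \(\sigma_i>\sigma_{i+1}\) it should be \(\tfrac{1-t}{1-q^{\leg+1}t^{\arm}}\). In the other case the quadratic relation trades this for \(\tfrac{1-t}{1-z}-(1-t)=\tfrac{(1-t)z}{1-z}\), where \(z=q^{\leg(u)+1}t^{\arm(u)}\). This matches the extra factor \(q^{\leg(u)+1}t^{\arm(u)}\) in the statement.

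Both sides of \eqref{eq:three term} are expressed in the combinatorial normalization of \cite{Fer11}, which differs from the operator normalization by powers of \(t\) counting triples or inversions. The leftover \(t\)-powers should be exactly \(t^{\twinv(s_i\alpha,\sigma)-\twinv(\alpha,\sigma)}\). To check this:

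1. Note that swapping \(\alpha_i,\alpha_{i+1}\) only changes the pairs involving \(\{i,i+1\}\).
2. Pairs \((k,l)\) with one index outside \(\{i,i+1\}\) are exchanged between \(i\) and \(i+1\). Their contributions can change only when the inequality \(\alpha_k\ge\alpha_l\) flips.
3. The pair \((i,i+1)\) itself contributes according to the two inequalities \(\alpha_i\ge\alpha_{i+1}\) and \(\sigma_i<\sigma_{i+1}\).

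To make the argument bijective, in the style the paper announces, I would instead realize \eqref{eq:three term} on fillings, as follows.

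1. Multiply through by \(1-z\) to clear the denominator, giving a sign-free-of-denominators identity.
2. Interpret \(1-z\) and \(1-t\) as signed weights. Concretely, attach to each filling of shape \(s_i\alpha\) a marked or unmarked state of the cell \(u\) and of the column pair \(i,i+1\).
3. Build one sign-reversing involution on each side.
4. Build a weight-preserving bijection between the surviving fillings of shape \(\alpha\) with basement \(\sigma\) and the disjoint union of fillings of shape \(s_i\alpha\) with basement \(\sigma s_i\) and fillings of shape \(s_i\alpha\) with basement \(\sigma\). The natural move is to exchange the contents of columns \(i\) and \(i+1\) at and below row \(\alpha_{i+1}\), then adjust the top part of column \(i\).

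The main obstacle is step 4 of this bijective construction. The bijection must preserve both maj and coinv, and this is delicate because swapping column segments changes which triples are type A or type B. I expect to handle this through a row-by-row "attacking-resolution" scan from the bottom up. The geometric factor \(1/(1-z)\) would arise from the number of full cycles the cell \(u\) can be pushed around, each cycle contributing \(q^{\leg+1}t^{\arm}\). If this scan fails, I would fall back on the algebraic derivation above, which is complete given the two operator facts.
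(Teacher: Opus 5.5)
\textbf{There is a genuine gap.} Your algebraic fallback fails at Fact~1, and this is exactly the left/right distinction the paper's remark after the theorem warns about.

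A Demazure--Lusztig operator $T_j$ acts on the variables, hence on the \emph{values} in the basement: $T_jE^{\pi}_\beta=t^{*}E^{s_j\pi}_\beta$. That is a \emph{left} multiplication. The basement $\sigma s_i$ is a \emph{right} multiplication, which swaps positions $i,i+1$. Writing $\sigma s_i=(\sigma s_i\sigma^{-1})\,\sigma$, the element $\sigma s_i\sigma^{-1}$ is the transposition of the values $\sigma_i,\sigma_{i+1}$. It is a simple reflection only when $|\sigma_i-\sigma_{i+1}|=1$, which is precisely Alexandersson's special case. Even then the operator is $T_{\min(\sigma_i,\sigma_{i+1})}$, not $T_i$.

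Concretely, take $n=3$, $\alpha=(1,0,0)$, $\sigma=[1,3,2]$, $i=1$. The tableau formula gives
$E^{\sigma}_\alpha=x_1+\frac{q(1-t)}{1-qt}(x_2+x_3)$ and $E^{\sigma s_1}_\alpha=x_3+\frac{1-t}{1-qt}(x_1+x_2)$.
The operator $T_1^{\pm1}$ preserves $\mathrm{span}(x_1,x_2)$, scales $x_3$, and acts by a scalar on $x_1+x_2$. So no scalar multiple of $T_1^{\pm1}E^{\sigma}_\alpha$ equals $E^{\sigma s_1}_\alpha$.

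For the same reason your Step~2 cannot work. $T_\sigma T_iT_\sigma^{-1}$ is not a generator unless $|\sigma_i-\sigma_{i+1}|=1$, so braid relations do not move $T_\sigma$ past $T_i$. Step~3 would in any case produce $E^{s_i\sigma}_{s_i\alpha}$, not $E^{\sigma s_i}_{s_i\alpha}$. As written, your argument only recovers Alexandersson's case.

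\textbf{How to repair it.} The skeleton survives if you do \emph{not} commute.
\begin{itemize}
\item Iterating the left-action rule from the identity basement gives $E^{\pi}_\beta=t^{(\cdot)}T_\pi E_\beta$. Here $T_\pi$ is the product of generators (or of their inverses, depending on normalization) along a reduced word of $\pi$.
\item The right factor $s_i$ then corresponds to a generator applied \emph{first}, to the identity-basement polynomial, where left and right actions coincide.
\item If $\sigma_i<\sigma_{i+1}$, then $T_\sigma T_i^{\pm1}=T_{\sigma s_i}$. Applying $T_\sigma$ to the $\sigma=\mathrm{id}$ intertwiner yields $E^{\sigma s_i}_{s_i\alpha}$ directly.
\item If $\sigma_i>\sigma_{i+1}$, write $T_\sigma=T_{\sigma s_i}T_i^{\pm1}$ and use the quadratic relation. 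This is the real source of your $(1-t)$ shift between the two cases.
\end{itemize}
Even after this repair the $t$-power is not automatic. You need the exact exponent in Ferreira's rule, which compares the parts of $\beta$ in the columns holding $j$ and $j+1$. You must then verify that the accumulated power is $\twinv(s_i\alpha,\sigma)-\twinv(\alpha,\sigma)$. This difference genuinely depends on $\sigma$, through indices $k$ with $\alpha_k$ between $\alpha_{i+1}$ and $\alpha_i$ and $\sigma_k$ between $\sigma_i$ and $\sigma_{i+1}$. Your three bullet points only locate where changes can occur; they do not compute them.

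\textbf{The bijective sketch.} This part is not a proof, and it misses the paper's mechanism. The paper works with the signed-filling formula of \cref{lem:signed}, whose denominator depends only on the shape. It uses the block bijection $\phi$: rows are grouped into blocks of intertwined rows, and each block is either swapped or left unchanged, preserving content, $\maj$ and $\inv$. It also uses $\psi$, which is $\phi$ with the bottommost block swapped. Setting $\{f(T),g(T)\}=\{\phi(T),\psi(T)\}$ according to basement, it proves the termwise identity $\wt_\pm(T)=\wt_\pm(f(T))+c_{i,\alpha,\sigma}\wt_\pm(g(T))$. No sign-reversing involution or geometric series over cycles is needed.
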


\begin{remark}
	\textcite[Proposition~17]{Ale19} proves \cref{theorem:three-term} for the case where \(\sigma_i = \sigma_{i+1} + 1\).
	The specialization for \(q=0\) is proved in \cite[Proposition~5.5]{AS19}.
	To the best of our knowledge, the identity \eqref{eq:three term} in the above generality is new.
\end{remark}

\begin{remark}
	In both identities with $\sigma s_i$ in the basement, $s_i$ acts on the \emph{right} of $\sigma$, swapping adjacent entries. This contrasts with the usual Demazure--Lusztig action $T_i E_\alpha^\sigma=t^* E_\alpha^{s_i \sigma}$ (see~\cite[Proposition 4.4.4]{Fer11}) where $s_i$ acts on the \emph{left}, and for $\sigma=id$ yields the known identity \cite[Eq. (8)]{HHL08}. Hence, our identities describe a new property for
	$E_\alpha^\sigma$.
\end{remark}

In \cref{sec:definitions}, we introduce the polynomials $E_\alpha^\sigma(\mathbf{x};q,t)$ as generating functions over non-attacking and signed fillings of composition shapes. We prove \cref{theorem:main} in \cref{sec:probabilistic} with a probabilistic bijection, and \cref{theorem:three-term} in \cref{sec:maj inv} using a $\maj$/$\inv$-preserving column-swapping bijection on signed fillings. We conclude in \cref{sec:straightening} with a straightening rule giving the coefficients in the expansion of $E_\alpha^\sigma$ in the basis $\{E_\beta^\tau\}$.

\subsection*{Acknowledgements.}
We thank Per Alexandersson, Sarah Mason, and Arun Ram for helpful discussions, and thank Donghyun Kim for directing us to \cite{KLO22}.

\section{Preliminaries} \label{sec:definitions}

A \vocab{composition} is a sequence
\(\alpha = \composition{\alpha_1, \alpha_2, \ldots, \alpha_n}\)
of nonnegative integers.
Define \(\rev(\alpha)=\composition{\alpha_n, \alpha_{n-1}, \ldots, \alpha_1}\), $\inc(\alpha)$ to be the weakly increasing rearrangement of the parts of $\alpha$.

Given a permutation \(\pi \in \sym{n}\) and a composition \(\alpha\),
the \vocab{left action} of \(\pi\) on \(\alpha\) is the composition
\(\pi \cdot \alpha = \composition{\alpha_{\pi^{-1}(1)}, \alpha_{\pi^{-1}(2)}, \ldots, \alpha_{\pi^{-1}(n)}}\).
This choice of notation guarantees that
\(\sigma \cdot (\pi \cdot \alpha) = (\sigma \pi) \cdot \alpha\)
for all \(\sigma, \pi \in \sym{n}\).
Note that \(\rev(\alpha) = w_0 \cdot \alpha\),
where \(w_0 = \window{n, n-1, \ldots, 1}\).
For permutations themselves, we use the usual product. In particular, \(\sigma s_i=\window{\sigma_1,\ldots,\sigma_{i+1},\sigma_i,\ldots,\sigma_n}\), that is, right multiplication by \(s_i\) swaps \(\sigma_i\) and \(\sigma_{i+1}\).

\subsection{Combinatorial formulas for \texorpdfstring{$E_\alpha^\sigma(\mathbf{x};q,t)$}{permuted-basement Macdonald polynomials}}

The \vocab{skyline diagram} of a composition
\(\alpha\)
is the set
$\dg(\alpha) = \left\{ (i, r) \,:\, i \in \interval{n} \text{ and } r \in \interval{\alpha_i} \right\}$,
where $(i,r)$ is a box in column $i$ and row $r$. The \vocab{augmented skyline diagram}
is the set $\adg(\alpha) = \dg(\alpha) \cup \left\{ (i, 0) \,:\, i \in \interval{n} \right\},$
where an zeroth row called the \vocab{basement} is added to the bottom of \(\dg(\alpha)\).

Let \(u = (i, r) \in \dg(\alpha)\).
The \vocab{arm set} of \(u\) is defined as
\begin{equation*}
	\Arm(u)  = \big\{
	(j, r-1) \in \adg(\alpha)
	\,:\,
	j < i \text{ and } \alpha_{j} < \alpha_{i}
	\big\} \cup \big\{
	(j, r) \in \dg(\alpha)
	\,:\,
	j > i \text{ and } \alpha_{j} \leq \alpha_{i}
	\big\}.
\end{equation*}
The \vocab{leg statistic} and \vocab{arm statistic} of \(u\) are defined as \(\leg(u) = \alpha_i-r\)
and \(\arm(u) = \left|\Arm(u)\right|\).
The box \vocab{south} of \(u = (i, r)\) is the box \(\south{u} = (i, r-1) \in \adg(\alpha)\).

A box \(u = (i_u, r_u)\) \vocab{attacks} \(v = (i_v, r_v)\) in \(\dg(\alpha)\) if
\(r_u = r_v\) and \(i_u < i_v\), that is, \(u\) and \(v\) are in the same row and \(u\) is to the left of \(v\); or
\(r_v = r_u - 1\) and \(i_v < i_u\), that is, \(u\) is in the row directly above \(v\) and \(u\) is to the right of \(v\), as in one of the configurations below:
\begin{center}
	\begin{tikzpicture}[x=.5cm, y=.5cm, baseline={(2,.5)}]
		\draw[black] (0, 0) rectangle (1, 1) node[midway] {\(u\)};
		\node at (2, .5) {\(\cdots\)};
		\draw[black] (3, 0) rectangle (4, 1) node[midway] {\(v\)};
	\end{tikzpicture}
	\hspace{3cm}
	\begin{tikzpicture}[x=.5cm, y=.5cm, baseline={(2,.5)}]
		\draw[black, shift={(0,-.5)}] (0, 0) rectangle (1, 1) node[midway] {\(v\)};
		\node at (2, .5) {\(\cdots\)};
		\draw[black, shift={(0,.5)}] (3, 0) rectangle (4, 1) node[midway] {\(u\)};
	\end{tikzpicture}
\end{center}

A \vocab{non-augmented filling} of shape \(\alpha\) is a map \(T \colon \dg(\alpha) \to \interval{n}\).
An \vocab{augmented filling} of shape \(\alpha\) and basement \(\sigma \in \sym{n}\) is
a map \(T \colon \adg(\alpha) \to \interval{n}\) such that \(T(i, 0) = \sigma_i\) for all \(i \in \interval{n}\).
A filling \(T\), augmented or not, is \vocab{non-attacking}
if \(T(u) \neq T(v)\) whenever box \(u\) attacks box \(v\).
Let $\Fill(\alpha, \sigma)$ and $\NAF(\alpha, \sigma)$ denote the sets of fillings and non-attacking fillings, respectively, of shape $\alpha$ with basement $\sigma$.

The \vocab{content} of a filling \(T\) is the composition
$\beta=\composition{\beta_1, \beta_2, \ldots, \beta_n}$,
where \(\beta_i = |\{u\in\dg(\alpha) : T(u)=i\}|\).
We write \(\mathbf{x}^T = x_1^{\beta_1} x_2^{\beta_2} \cdots x_n^{\beta_n}\).
Let \(\NAF(\alpha, \sigma, \beta)\) denote
the set of non-attacking fillings
of shape \(\alpha\),
basement \(\sigma\),
and content \(\beta\).

Let \(\Des(T) = \{ u \in \dg(\alpha) : T(u) > T(\south{u}) \}\) be the set of descents in \(T\).
The \vocab{major index} of \(T\) is \( \maj(T) = \sum_{u \in \Des(T)} \left(\leg(u) + 1\right) \).
See \cref{fig:major-index} for an example.
\begin{figure}[htbp]
	\centering
	\begin{tikzpicture}[x=.4cm, y=.4cm]
		\def\shape{2, 2, 0, 1}
		\def\filling{
			3, 1, \textbf2,
			1, \textbf2, \textbf4,
			2,
			4, 4}
		\xdef\reading{0}
		\fill[black!20] (1,2) rectangle ++(1,1);
		\fill[black!20] (2,1) rectangle ++(1,1);
		\fill[black!20] (2,2) rectangle ++(1,1);
		\foreach \height [count = \column] in \shape{
			\foreach \row in {0,...,\height}{
					\pgfmathparse{int(\reading + 1)}
					\xdef\reading{\pgfmathresult}
					\draw[black] (\column, \row) rectangle (\column + 1, \row + 1);
					\coordinate (B\reading) at (\column + .5, \row + .5);
				}
		}
		\foreach \i [count = \j] in \filling{
			\node at (B\j) {\i};
		}
	\end{tikzpicture}
	\caption{A non-attacking augmented filling \(T\) of shape \(\alpha = \composition{2, 2, 0, 1}\), basement \(\sigma = \window{3, 1, 2, 4}\), and \(\content(T) = \composition{1, 2, 0, 2}\). The descents in \(T\) are shaded, with legs \(\leg(1,2)=\leg(2,2)=0\) and \(\leg(2,1)=1\). Thus, \(\maj(T) = 1 + 1 + 2 = 4\).}
	\label{fig:major-index}
\end{figure}

We define the \vocab{inversion} and \vocab{coinversion} statistics,
following \cite[Section~3]{HHL08}. Given \(a, b \in \interval{n}\),
let \(\ind{a, b} = 1\) if \(a > b\), and \(\ind{a, b} = 0\) otherwise.
Given \(a, b, c \in \interval{n}\), define \( \ind{a, b, c} = \ind{a, b} + \ind{b, c} - \ind{a, c} \in \{0, 1\} \).
A \vocab{triple} in \(\adg(\alpha)\) is a tuple \((u, v, w)\) of boxes such that \(v \in \Arm(u)\) and \(w = \south{u}\).
There are two possible configurations, shown in \Cref{fig:triples}.

\begin{figure}[htbp]
	\centering
	\begin{subfigure}[t]{0.46\textwidth}
		\centering
		\begin{ytableau}
			u & \none & \none[\cdots] & \none & v \\
			w
		\end{ytableau}
		\caption*{(\textsc{Type I}) The column containing \(u\) and \(w\) is at least as high as the column containing \(v\).}
	\end{subfigure}\hspace{0.03\textwidth}
	\begin{subfigure}[t]{0.46\textwidth}
		\centering
		\begin{ytableau}
			\none & \none & \none         & \none & u \\
			v     & \none & \none[\cdots] & \none & w
		\end{ytableau}
		\caption*{(\textsc{Type II}) The column containing \(u\) and \(w\) is strictly higher than the column containing \(v\).}
	\end{subfigure}
	\vspace{-.5em}
	\caption{Configurations in which $(u, v, w)$ forms a triple.}
	\label{fig:triples}
\end{figure}

A triple \((u, v, w)\) is an \vocab{inversion (resp.~coinversion) triple} if \(\ind{T(u), T(v), T(w)} = 1\) (resp.~\(0\)), and \(\inv(T)\) (resp.~\(\coinv(T)\)) denotes the number of such triples in \(T\).

\begin{theorem}[{\cite[Definition~4.4.2]{Fer11}}] \label{thm:tableau-formula}
	The permuted-basement Macdonald polynomial is given by
	\begin{equation*}
		E^{\sigma}_{\alpha}(\mathbf{x}; q, t)
		=
		\mkern-15mu
		\sum_{T \in \NAF(\alpha, \sigma)}
		\mkern-15mu
		\wt(T),
		\quad \text{where} \quad
		\wt(T) = \mathbf{x}^T
		q^{\maj(T)} t^{\coinv(T)}
		\mkern-20mu
		\prod_{\substack{u \in \dg(\alpha) \\ T(u) \neq T(\south{u})}}
		\mkern-15mu
		\tfrac{1 - t}{1 - q^{1 + \leg(u)} t^{1 + \arm(u)}}.
	\end{equation*}
\end{theorem}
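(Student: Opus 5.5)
Since \cref{thm:tableau-formula} is taken as the definition of $E_\alpha^\sigma$ in \cite{Fer11}, what needs proof is that the combinatorial generating function agrees with the algebraic definition. In that definition, $E_\alpha^\sigma$ is obtained from the ordinary nonsymmetric Macdonald polynomial by applying Demazure--Lusztig operators. Write $F_\alpha^\sigma$ for the right-hand side of \cref{thm:tableau-formula}. The plan is an induction on the length of $\sigma$, built from two steps.

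\textbf{Base case.} Take the basement for which $F_\alpha^\sigma$ is exactly the Haglund--Haiman--Loehr generating function, namely $\sigma = w_0$ or the identity depending on convention. Here the statistics $\maj$, $\coinv$, $\arm$ and $\leg$ above are precisely those of \cite[Section~3]{HHL08}. After reversing columns if needed, we may then quote \cite{HHL08} for $F_\alpha^{\sigma} = E_\alpha$.

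\textbf{Inductive step.} Show that the generating functions satisfy the same recursion as the algebraic objects, $T_i F_\alpha^\sigma = t^{*} F_\alpha^{s_i\sigma}$, in the form of \cite[Proposition~4.4.4]{Fer11}. Here the left action $s_i\sigma$ exchanges the \emph{values} $i$ and $i+1$ in the basement, and $t^{*}$ is $1$ or $t^{\pm1}$ according to the relative position of $i,i+1$ in $\sigma$. Because $T_i$ only involves $x_i$ and $x_{i+1}$, I would follow the HHL argument for \cite[Eq.~(8)]{HHL08}:
\begin{enumerate}
\item Fix all entries of a filling other than $i$ and $i+1$. Also fix the set $S$ of boxes (basement included) holding $i$ or $i+1$.
\item Sum over the ways of distributing the labels $i$ and $i+1$ on $S$ subject to the non-attacking condition.
\item Check that $\maj$ and the $\frac{1-t}{1-q^{\cdot}t^{\cdot}}$ factors do not change when $i$ and $i+1$ are exchanged. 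They can only change at a vertical adjacency of $i$ over $i+1$, and such adjacencies are fixed by the constraint structure.
\item Check that $\coinv$ changes in a controlled way. Triples whose three entries are not all in $\{i,i+1\}$ are unaffected, since $\chi_{a,b,c}$ depends only on relative order. So the change is governed by pairs of $\{i,i+1\}$-boxes in attacking position.
\end{enumerate}

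The non-attacking condition forces the $\{i,i+1\}$-boxes to decompose into chains along attacking relations, each chain with alternating labels, plus free boxes. Summing over the label choices on free boxes then gives each restricted generating function the shape $(x_i x_{i+1})^{a}\,G(x_i,x_{i+1})$. Here $G$ is a product of one-variable factors, each either symmetric or a ``$t$-deformed'' pair such as $x_i + t x_{i+1}$. On such factors $T_i$ acts by the elementary rank-two computation. The only chain that behaves differently under $i\leftrightarrow i+1$ is the one ending in the basement, and that chain produces exactly the extra factor $t^{*}$ and the basement swap $\sigma\mapsto s_i\sigma$.

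\textbf{Main obstacle.} I expect the hardest part to be the bookkeeping for the chain through the basement. There, Type~II triples with $v$ in the basement make the coinversion count depend on whether $\sigma^{-1}(i)<\sigma^{-1}(i+1)$. One must show that this dependence yields precisely the case split in $t^{*}$ and no spurious power of $t$. I would first verify this on single-row shapes, where all triples are explicit, and then argue that rows above contribute independently.
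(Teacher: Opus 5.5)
\textbf{Context.} The paper gives no proof of this statement. It is imported from \cite{Fer11}, where the sum over $\NAF(\alpha,\sigma)$ is the \emph{definition} of $E_\alpha^\sigma$. Compatibility with Demazure--Lusztig operators and with the ordinary $E_\alpha$ is a separate result (\cite[Proposition~4.4.4]{Fer11}, \cite{HHL08}). So what you are really proposing is a proof of that compatibility. The architecture (HHL base case, then a recursion in $\sigma$) is sensible, but the inductive step fails as written, mainly because Step~3 is false.

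\textbf{Step~3 fails.} A box $u$ and the box $\south{u}$ directly below it do not attack each other. So when both lie in $S$, their labels are unconstrained and all four patterns $(i,i),(i,i+1),(i+1,i),(i+1,i+1)$ occur. Take $n=2$, $\alpha=(2,0)$, $\sigma=\mathrm{id}$:
\begin{itemize}
\item Every labelling of column~$1$ by $\{1,2\}$ is non-attacking, so there is a single class.
\item The fillings $11,12,21,22$ (read bottom to top) have $\maj=0,1,2,2$, and different sets of boxes carry $\frac{1-t}{1-q^{1+\leg(u)}t^{1+\arm(u)}}$.
\item The class sum is $x_1^2+\frac{q(1+q)(1-t)}{1-q^2t}x_1x_2+\frac{q^2(1-t)}{1-q^2t}x_2^2$. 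This is neither symmetric in $x_1,x_2$ nor a product of linear forms: at $t=0$ its discriminant is $q^2(q-1)(q+3)$, which is not a square. So the factorization you predict does not exist.
\end{itemize}
The same vertical pairs break Step~4. For a triple $(u,v,w)$ with $u,w\in S$ and $v\notin S$, one has $\chi_{a,b,c}=1-\chi_{a,c}$. So the triple flips with the descent status of $u$, even though $u,w$ are not in attacking position. Within a class, these vertical $i/(i+1)$ adjacencies carry all the label-dependent powers of $q$ and all the label-dependent rational factors. Showing how they recombine with $\coinv$ under $T_i$ is therefore the real content of the recursion. The basement chain you flag as the main obstacle is secondary by comparison.

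\textbf{The recursion is misstated.} Since $T_i^2=(t-1)T_i+t$, a relation $T_iF_\alpha^\sigma=t^*F_\alpha^{s_i\sigma}$ holding for both $\sigma$ and $s_i\sigma$ would make $F_\alpha^\sigma$ a $T_i$-eigenvector proportional to $F_\alpha^{s_i\sigma}$. So it holds in one direction only. Concretely, take $n=2$ and $T_1=ts_1+\frac{(t-1)x_2}{x_1-x_2}(s_1-1)$. The formula gives
\begin{itemize}
\item $F^{[1,2]}_{(0,1)}=x_2$ and $F^{[2,1]}_{(0,1)}=x_1$;
\item $F^{[1,2]}_{(1,0)}=x_1+\frac{q(1-t)}{1-qt}x_2$ and $F^{[2,1]}_{(1,0)}=x_2+\frac{1-t}{1-qt}x_1$.
\end{itemize}
Then $T_1F^{[2,1]}_{(0,1)}=F^{[1,2]}_{(0,1)}$ and $T_1F^{[2,1]}_{(1,0)}=tF^{[1,2]}_{(1,0)}$. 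However, $T_1F^{[1,2]}_{(0,1)}=tx_1+(t-1)x_2$ is not a multiple of $x_1$. So $t^*$ depends on $\alpha$ (here, on which of the columns standing on the basement entries $1,2$ is taller), not just on the relative position of $i,i+1$ in $\sigma$. This part is repairable, because induction on length needs only one direction. The treatment of vertical pairs is the genuinely missing idea.
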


Adapting the strategy in \cite[Section~8]{HHL05} for the symmetric Macdonald polynomials, we derive a formula for \(E_\alpha^\sigma\) in terms of \emph{signed fillings} (also called superfillings).

We replace the alphabet \(\interval{n}\) with the signed alphabet \(\interval{n}_{\pm} = \{1 < \bar{1} < 2 < \bar{2} < \cdots < n < \bar{n}\}\), where \(\bar{i}\) denotes the negative of \(i\).
For \(a, b \in \interval{n}_{\pm}\), define \(|a|\) to be the unsigned value of \(a\), and define \(\ind{a, b} = 1\) if \(a > b\) or \(a = b\) are negative, and \(\ind{a, b} = 0\) otherwise.
A \vocab{signed augmented filling} of shape \(\alpha\) and basement \(\sigma \in \sym{n}\) is a map \(T \colon \adg(\alpha) \to \interval{n}_{\pm}\) such that \(T(i, 0) = \sigma_i\) for all \(i \in \interval{n}\).
We extend the definitions of descents, major index, triples, and coinversions to signed fillings by using the updated \(\ind{\cdot, \cdot}\) function for comparisons between pairs of entries.
Let \(\Fill_\pm(\alpha, \sigma)\) denote the set of signed augmented fillings of shape \(\alpha\) and basement \(\sigma\).
For $T\in\Fill_\pm(\alpha, \sigma)$, let $|T|\in\Fill(\alpha, \sigma)$ be the filling obtained by taking the absolute value of each entry in $T$, and let $\operatorname{neg}(T)$ be the number of negative entries in $T$.
Let \(\NAF_\pm(\alpha, \sigma) = \{T \in \Fill_\pm(\alpha, \sigma) : |T| \in \NAF(\alpha, \sigma)\}\) denote the set of signed non-attacking fillings.

\begin{lemma}
	\label{lem:signed}
	The permuted-basement Macdonald polynomial is given by
	\begin{equation} \label{eq:signed}
		E^{\sigma}_{\alpha}(\mathbf{x}; q, t)
		= \mkern-10mu \sum_{T \in \Fill_\pm(\alpha, \sigma)} \mkern-10mu
		\wt_\pm(T),
		\quad \text{where} \quad
		\wt_\pm(T) =
		\frac{\mathbf{x}^{|T|} q^{\maj(T)} t^{\coinv(T)} (-t)^{\operatorname{neg}(T)}}
		{\prod_{u \in \dg(\alpha)} (1 - q^{1 + \leg(u)} t^{1 + \arm(u)})}.
	\end{equation}
\end{lemma}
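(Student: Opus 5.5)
The plan is to follow the HHL strategy: fix an unsigned filling $S\in\Fill(\alpha,\sigma)$ and sum $\wt_\pm(T)$ over all signings $T$ with $|T|=S$. Afterwards we compare the result with the summand for $S$ in \cref{thm:tableau-formula}, multiplied out over the common denominator $\prod_{u}(1-q^{1+\leg(u)}t^{1+\arm(u)})$. Concretely, it suffices to show two things. First, if $S$ is attacking, then the signed sum over the fiber of $S$ vanishes. Second, if $S$ is non-attacking, then
\begin{equation*}
\sum_{|T|=S} q^{\maj(T)}t^{\coinv(T)}(-t)^{\operatorname{neg}(T)}
= q^{\maj(S)}t^{\coinv(S)}\prod_{\substack{u\in\dg(\alpha)\\ S(u)=S(\south{u})}}\bigl(1-q^{1+\leg(u)}t^{1+\arm(u)}\bigr)\prod_{\substack{u\in\dg(\alpha)\\ S(u)\ne S(\south{u})}}(1-t).
\end{equation*}
Here the basement is always unsigned (positive), so only cells of $\dg(\alpha)$ are signed.

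For the vanishing, I would order the cells in a reading order compatible with the triples: rows from top to bottom, and within each row right to left, as in HHL. Then I take the first attacking pair $u,v$ with $S(u)=S(v)$ and define a sign-reversing involution that toggles the sign of one of them. The key check is that toggling changes $\operatorname{neg}$ by one while keeping $\maj$ and $\coinv$ fixed, so that terms cancel in pairs. With the ordering $a<\bar a$, two equal-valued entries compare according to their signs, so toggling should just transfer a coinversion between the relevant triples. I would verify this separately for the two attack configurations and the Type I/Type II triples, which is routine but fiddly.

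For the non-attacking case, the main idea is that for a fixed $S$ the sign of each cell affects the statistics independently, so the sum factorizes cell by cell. Consider a cell $u$ with value $a=S(u)$. Making $u$ negative can change whether $u$ is a descent over $\south{u}$, or whether the cell above $u$ is a descent over $u$. It can also change $\ind{\cdot,\cdot}$ comparisons in the triples containing $u$, but only comparisons against entries of equal absolute value matter. The non-attacking condition guarantees that the only equalities occurring in a triple are those between $u$ and $w=\south{u}$; comparisons with $v$ are never ties. Thus the sign of $u$ interacts only with its own column-adjacent cells.

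I would then do the local computation in the spirit of HHL's Lemma. If $S(u)\ne S(\south{u})$, flipping $u$ to $\bar a$ leaves $\maj$ unchanged and changes $\coinv$ in a way that gives total factor $1+(-t)\cdot1=1-t$. I would expect to check this with $\ind{a,b,c}$ as an identity that redistributes, in HHL fashion, by recomputing the overall statistic rather than doing it cellwise. If $S(u)=S(\south{u})$, the negative sign forces $u$ to be a descent, contributing $q^{1+\leg(u)}$, and makes every one of the $\arm(u)$ triples at $u$ a coinversion-to-inversion switch, contributing $t^{\arm(u)}$. Together with $-t$ this gives $1-q^{1+\leg(u)}t^{1+\arm(u)}$. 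The main obstacle is making this cellwise independence rigorous: the descent status of the cell above $u$ also depends on the sign of $u$, so the correct bookkeeping is to process cells column by column, bottom to top. One conditions on the sign of $\south{u}$ and uses the comparison convention that $\bar a>\bar a$ (equal negatives count as a descent) so that each cell's contribution depends only on its own sign. I would mirror HHL05 Section 8 exactly, with the arm set replaced by the skyline $\Arm(u)$.
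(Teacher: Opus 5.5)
Your architecture is the paper's: a sign-reversing involution kills the fibers over attacking $S$, and then you sum over the signings of each non-attacking $S$. Your non-attacking computation is right up to two points of wording. First, when $S(u)=S(\south{u})$, negating $u$ turns each of the $\arm(u)$ triples $(u,v,\south{u})$ from an inversion into a coinversion; that is why the factor is $t^{+\arm(u)}$, whereas a coinversion-to-inversion switch would give $t^{-\arm(u)}$. Second, there is nothing to condition on the sign of $\south{u}$: for $|T(y)|=|T(x)|$, the comparison $\chi(T(y),T(x))$ equals $1$ exactly when $T(y)$ is negative. In fact every sign-dependent quantity depends on the sign of a single cell, so fiber sums factorize cellwise outright.

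The gap is the vanishing step, which cannot work as stated. If toggling a sign kept $\maj$ and $\coinv$ fixed while changing $\operatorname{neg}$ by one, the two weights would differ by a factor $-t$, not $-1$, and nothing would cancel. You need a toggle that fixes $\maj$ and changes $\coinv$ by exactly $\mp 1$ when $\operatorname{neg}$ changes by $\pm 1$. So which cell you toggle is the real content, and ``one of the first attacking pair in top-to-bottom, right-to-left order'' does not secure it.

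Consider a cell $x$ with $S(x)\neq S(\south{x})$. Its sign affects no descent. The triples it affects are exactly those (one each) containing an attacking pair $x\to y$, with $x$ the attacker and $S(y)=S(x)$; in each, $\chi=1$ iff $x$ is negative. Toggling $x$ therefore shifts $\coinv$ by the number of such $y$.

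For a concrete failure, take $\alpha=(1,1,2,0)$, $\sigma=[1,2,3,4]$, $S(3,2)=S(1,1)=S(2,1)=4$, $S(3,1)=3$. The first cell in your order, $(3,2)$, attacks two $4$'s. Its first partner $(2,1)$ attacks no $4$ and sits above a $2$. Neither toggle cancels; only toggling $(1,1)$ does.

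A correct choice is as follows. Read top to bottom and left to right, with position $\omega$ counting all $n$ positions of each row. Then $x$ attacks $y$ iff $0<\omega(y)-\omega(x)<n$, and $\omega(\south{x})=\omega(x)+n$. Let $x$ be the last cell that attacks a cell of equal value, and let $z$ be the next cell of that value after $x$. Then $x$ attacks $z$, and $z$ attacks no equal cell, so the following equal cell lies at distance $\geq n+1$ from $x$. Hence $x$ attacks exactly one equal cell and $S(\south{x})\neq S(x)$. Since every attacking pair lies in exactly one triple, toggling the sign of this $x$ (a choice depending only on $|T|$) is the required sign-reversing involution.
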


\begin{proof}[Proof sketch]
	Adapting~\cite[Section~5.1]{HHL05}, there exists a \(\wt_\pm\)-sign-reversing involution on \(\Fill_\pm(\alpha, \sigma)\) with signed non-attacking fillings as fixed points.
	Thus, the right-hand side of \eqref{eq:signed} equals \(\sum_{T \in \NAF_\pm(\alpha, \sigma)} \wt_\pm(T)\).
	We then group the signed non-attacking fillings \(T\) with equal \(|T|\), yielding a compact formula over unsigned non-attacking fillings.
	By noting that \(\wt(S) = \sum_{|T| = S} \wt_\pm(T)\) for any \(S \in \NAF(\alpha, \sigma)\),
	we have that \(\sum_{T \in \NAF_\pm(\alpha, \sigma)} \wt_\pm(T)\)
	equals the formula in \cref{thm:tableau-formula}.
\end{proof}

\subsection{Probabilistic bijections}

Probabilistic bijections,
also referred to as \emph{bijectivization} or \emph{coupling} by \cite{BP19},
generalize weight-preserving bijections, and allow one to prove equalities of generating functions for combinatorial sets that either differ in cardinality, or do not admit a direct weight-preserving bijection.
Our exposition follows~\cite{AF22}.

\begin{definition}
	Let \(\algebra\) be an algebra.
	Let \(\mathbf{T}\) and \(\mathbf{U}\) be sets.
	A \vocab{probability map} from \(\mathbf{T}\) to \(\mathbf{U}\) is
	a function \(\prob{} \colon \mathbf{T} \times \mathbf{U} \to \algebra\) such that for every \(T \in \mathbf{T}\), we have
	$\sum_{U \in\mathbf{U}} \prob{}(T, U) = 1$.
\end{definition}

\begin{definition}[Probabilistic bijection {\cite{AF22}}]
	Let \(\algebra\) be an algebra.
	Let \(\mathbf{T}\) and \(\mathbf{U}\) be finite sets
	equipped with weight functions
	\(\wt_{\mathbf{T}} \colon \mathbf{T} \to \algebra\) and
	\(\wt_{\mathbf{U}} \colon \mathbf{U} \to \algebra\).
	A \vocab{probabilistic bijection} between
	\( (\mathbf{T}, \wt_{\mathbf{T}}) \) and
	\( (\mathbf{U}, \wt_{\mathbf{U}}) \) is
	a pair of probability maps
	\(\prob{\mathbf{T}} \colon \mathbf{T} \times \mathbf{U} \to \algebra\) and
	\(\prob{\mathbf{U}} \colon \mathbf{U} \times \mathbf{T} \to \algebra\)
	such that for every \(T \in \mathbf{T}\) and \(U \in \mathbf{U}\), we have
	\begin{equation*}
		\wt_{\mathbf{T}}(T) \prob{\mathbf{T}}(T, U)
		= \wt_{\mathbf{U}}(U) \prob{\mathbf{U}}(U, T).
	\end{equation*}
\end{definition}

\begin{proposition} \label{proposition:equal-weight-sum}
	Let \(\prob{\mathbf{T}} \colon \mathbf{T} \times \mathbf{U} \to \algebra\)
	and \(\prob{\mathbf{U}} \colon \mathbf{U} \times \mathbf{T} \to \algebra\)
	form a probabilistic bijection between
	\( (\mathbf{T}, \wt_{\mathbf{T}}) \) and
	\( (\mathbf{U}, \wt_{\mathbf{U}}) \).
	Then,
	\begin{equation*}
		\sum_{T \in \mathbf{T}} \wt(T) = \sum_{U \in\mathbf{U}} \wt(U).
	\end{equation*}
\end{proposition}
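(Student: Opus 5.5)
The plan is a double-counting argument: compute the total weighted mass
\begin{equation*}
	S = \sum_{T \in \mathbf{T}} \sum_{U \in \mathbf{U}} \wt_{\mathbf{T}}(T)\, \prob{\mathbf{T}}(T, U)
\end{equation*}
in two ways. Both \(\mathbf{T}\) and \(\mathbf{U}\) are finite, so this is a finite sum in the algebra \(\algebra\), and the order of summation can be interchanged freely. Throughout, \(\wt\) on the left-hand side of the statement means \(\wt_{\mathbf{T}}\), and on the right-hand side it means \(\wt_{\mathbf{U}}\).

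\emph{First evaluation.} Sum over \(U\) first. Since \(\wt_{\mathbf{T}}(T)\) does not depend on \(U\), it factors out of the inner sum. The defining property of the probability map \(\prob{\mathbf{T}}\) is \(\sum_{U} \prob{\mathbf{T}}(T, U) = 1\), so the inner sum collapses and \(S = \sum_{T \in \mathbf{T}} \wt_{\mathbf{T}}(T)\).

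\emph{Second evaluation.} Apply the probabilistic bijection identity termwise, replacing each summand \(\wt_{\mathbf{T}}(T)\prob{\mathbf{T}}(T,U)\) by \(\wt_{\mathbf{U}}(U)\prob{\mathbf{U}}(U,T)\). Then swap the order of summation and sum over \(T\) first. Now \(\wt_{\mathbf{U}}(U)\) factors out, and \(\sum_{T} \prob{\mathbf{U}}(U, T) = 1\) because \(\prob{\mathbf{U}}\) is also a probability map. This gives \(S = \sum_{U \in \mathbf{U}} \wt_{\mathbf{U}}(U)\).

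Comparing the two evaluations proves the claim. There is no real obstacle. The only points to note are that finiteness is what justifies the interchange of summation, and that factoring the weight out of the inner sum uses only distributivity in \(\algebra\). Commutativity of \(\algebra\) is not needed: multiplication is used only as written, with the weight on the left in both evaluations.
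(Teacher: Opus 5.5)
Your double-counting of $\sum_{T,U}\wt_{\mathbf{T}}(T)\,\prob{\mathbf{T}}(T,U)$ is correct and complete, and it is the standard argument for this fact; the paper itself states the proposition without proof, following the exposition of \cite{AF22}. Your remarks that finiteness justifies interchanging the sums, and that only distributivity (not commutativity of $\algebra$) is used, are also accurate.
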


\section{A probabilistic bijection for non-attacking fillings} \label{sec:probabilistic}

Because of the complexity of the weight of a non-attacking filling, there is no ordinary weight-preserving bijection between
\(\NAF(\alpha,\sigma)\) and \(\NAF(\alpha,\sigma s_i)\)
to prove \cref{theorem:main}. However, a probabilistic bijection overcomes this obstacle. This section is based on \cite{DM25}.

We define a map on non-attacking fillings
from which we will construct a probabilistic bijection between
\(\NAF(\alpha,\sigma)\) and \(\NAF(\alpha,\sigma s_i)\).
Our technique is an adaptation to non-attacking fillings of a deterministic operator introduced in \cite{LN12} and used in \cite{CHMMW22} to bijectively prove certain column swapping identities on $\Fill(\lambda)$,
and a generalization of the probabilistic operator defined in \cite{Man24} for fillings of partition shapes for the symmetric Macdonald polynomials.

Fix $\sigma\in\sym{n}$ and let $\alpha$ be a composition with $i\in[n-1]$ such that $\alpha_i=\alpha_{i+1}$.

\begin{definition}
	Given a filling \(T\) of shape \(\alpha\) and \(r \in \interval{0, \alpha_i}\),
	let \(\swap[r]{i}(T)\) be the filling obtained by
	swapping the entries in boxes \((i, r)\) and \((i+1, r)\) in \(T\).
	Let \(\sswap[0, r]{i} = \swap[r]{i} \circ \cdots \circ \swap[1]{i} \circ \swap[0]{i}\),
	be a sequence of swaps
	in the first \(r+1\) rows in the columns \(i\) and \(i+1\).
\end{definition}

Note that for any \(r\geq 0\), $t_i^{(r)}$ preserves the content of a filling, and if \(T \in \Fill(\alpha, \sigma)\),
then \(\sswap[0, r]{i}(T) \in \Fill(\alpha, \sigma s_i)\).
However, \(\swap[r]{i}\) does not in general preserve the non-attacking property, so it does not give a valid map from $\NAF(\alpha,\sigma)$ to $\NAF(\alpha,\sigma s_i)$. Instead, we will use $\swap[r]{i}$ to define a probabilistic bijection between the two sets, as follows.

\begin{definition}[{\(\pp[r]{i}\)}]
	Let \(T \in \NAF(\alpha, \sigma)\) be a non-attacking filling.
	Let \(r \in \interval{0, \alpha_i-1}\).
	We define \(\pp[r]{i}(T) \in \rationals(q, t)\).
	Let \(a = T(i, r)\), \(b = T(i+1, r)\), \(c = T(i, r+1)\), and \(d = T(i+1, r+1)\).
	Let \(A = \arm(i+1, r+1)\) and \(\ell = \leg(i+1, r+1)\).
	Then, the value of \(\pp[r]{i}(T)\) is given in \cref{table:propagate}.
\begin{table}[htbp]
	\centering
	\begin{tabular}[c]{cccc}
		\toprule
		condition & sub-condition & \(\pp[r]{i}(T)\) & \(1 - \pp[r]{i}(T)\) \\
		\midrule
		\multirow{2}{*}[-.5ex]{\(|\{a, b, c, d\}| = 4\)} & \(\ind{c, d, a} = \ind{c, d, b}\)
		& \(0\) & \(1\) \\ \cmidrule{2-4}
		& \(\ind{c, d, a} = \ind{d, c, b}\)
		& \(1\) & \(0\) \\ \midrule
		& \(b = c\)
		& \(0\) & \(1\) \\ \cmidrule{2-4}
		\(|\{a, b, c, d\}| = 3\) & \(b = d\)
		& \(1\) & \(0\) \\ \cmidrule{2-4}
		& \(a = c\)
		& \(t^{1 - \ind{d, a, b}} \frac{1 - q^{\ell+1}t^{A+1}}{1-q^{\ell+1}t^{A+2}}\)
		& \((q^{\ell+1}t^{A+1})^{\ind{d, a, b}} \frac{1-t}{1 - q^{\ell+1}t^{A+2}}\) \\ \midrule
		\(|\{a, b, c, d\}| = 2\) & \(a = c\), \(b = d\)
		& \(1\) & \(0\) \\
		\bottomrule
	\end{tabular}
	\caption{The values of \(\pp[r]{i}(T)\) and \(1 - \pp[r]{i}(T)\) for \(T \in \NAF(\alpha, \sigma)\) and \(r \in \interval{0, \alpha_i-1}\).}
	\label{table:propagate}
\end{table}

\end{definition}

\begin{definition}[\(\prob{i}\)]
	The map \( \prob{i} \colon \NAF(\alpha, \sigma) \times \Fill(\alpha, \sigma s_i) \to \rationals(q, t) \)
	is defined for \(T \in \NAF(\alpha, \sigma)\) and
	\(U \in \Fill(\alpha, \sigma s_i)\) by letting
	\begin{equation*}
		\prob{i}(T, U) =
		\left( \prod_{r = 0}^{h-1} \pp[r]{i}(T) \right)
		\left( 1 - \pp[h]{i}(T) \right)
	\end{equation*}
	if there exists \(h \in \interval{0, \alpha_i}\)
	such that \(U = \sswap[0, h]{i}(T)\),
	and \(\prob{i}(T, U) = 0\) otherwise.
\end{definition}

\begin{proposition}
	The map
	\(\prob{i} \colon \NAF(\alpha, \sigma) \times \Fill(\alpha, \sigma s_i) \to \rationals(q, t)\)
	is a probability map, that is,
	for all \(T \in \NAF(\alpha, \sigma)\),
	\( \sum_{U \in \Fill(\alpha, \sigma s_i)} \prob{i}(T, U) = 1\).
\end{proposition}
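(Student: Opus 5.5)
We need to show that the values of \(\prob{i}(T,\cdot)\) sum to one. The plan is a telescoping argument.

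Fix \(T \in \NAF(\alpha,\sigma)\). The only fillings \(U\) with possibly nonzero \(\prob{i}(T,U)\) are \(U_h = \sswap[0,h]{i}(T)\) for \(h \in \interval{0,\alpha_i}\). For the final index \(h=\alpha_i\) the factor \(\pp[\alpha_i]{i}(T)\) is not defined by \cref{table:propagate}, since the table only covers \(r \le \alpha_i - 1\). We adopt the natural convention \(\pp[\alpha_i]{i}(T)=0\), so the last term is \(\prod_{r=0}^{\alpha_i-1}\pp[r]{i}(T)\); this matches "swap everything up to the top".

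The first step is to check that the \(U_h\) are pairwise distinct, or, failing that, to handle the coincidences. Two fillings \(U_h\) and \(U_{h'}\) with \(h<h'\) differ exactly in the rows \(r\in(h,h']\) where \(T(i,r)\neq T(i+1,r)\). Since \(T\) is non-attacking and the boxes \((i,r)\) and \((i+1,r)\) lie in the same row, they hold distinct entries for \(r \geq 1\); in row \(0\) they hold \(\sigma_i\neq\sigma_{i+1}\). Hence the \(U_h\) are pairwise distinct, and
\begin{equation*}
	\sum_U \prob{i}(T,U)=\sum_{h=0}^{\alpha_i}\prod_{r<h}\pp[r]{i}(T)\,\bigl(1-\pp[h]{i}(T)\bigr).
\end{equation*}

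The second step is the telescoping identity. Write \(P_h=\prod_{r<h}\pp[r]{i}(T)\), so that \(P_0=1\). Each summand equals \(P_h-P_{h+1}\), and the sum therefore collapses to \(P_0-P_{\alpha_i+1}=1-0=1\), using the convention above. This identity holds formally in \(\rationals(q,t)\) for any values of the \(\pp[r]{i}\).

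The only genuine obstacle is well-definedness: \cref{table:propagate} must assign exactly one value to every non-attacking \(T\) and every \(r\in\interval{0,\alpha_i-1}\). We would verify this in three parts.

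\emph{Size of \(\{a,b,c,d\}\).} We have \(a\neq b\) and \(c\neq d\) as noted above. Box \((i+1,r+1)\) attacks \((i,r)\), so \(d\neq a\). Hence \(|\{a,b,c,d\}|\in\{2,3,4\}\), and the only possible coincidences are \(b=c\), \(b=d\), and \(a=c\).

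\emph{Size two.} Then necessarily \(a=c\) and \(b=d\).

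\emph{Size three.} Exactly one of the three coincidences holds, since any two of them would force \(a=b\) or \(c=d\).

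\emph{Size four.} The two subconditions are complementary because \(\ind{c,d,a}+\ind{d,c,a}=1\)-type identities hold for distinct values, i.e.\ \(\ind{c,d,b}+\ind{d,c,b}=1\).

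Finally, the entries of \cref{table:propagate} must be well defined. In the \(a=c\) row, the two columns sum to \(1\): one checks \(t^{1-\epsilon}(1-Q t)+(Qt)^{\epsilon}(1-t)=1-Qt^2\) for \(\epsilon\in\{0,1\}\) with \(Q=q^{\ell+1}t^{A}\). The denominators \(1-q^{\ell+1}t^{A+2}\) are nonzero in \(\rationals(q,t)\). The telescoping argument itself only needs each \(\pp[r]{i}(T)\) to be a well-defined element of \(\rationals(q,t)\).
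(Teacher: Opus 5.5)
Your proof is correct, and it is the intended argument: the extended abstract states this proposition without proof, and the distinctness of the fillings $\sswap[0,h]{i}(T)$, the convention $\pp[\alpha_i]{i}(T)=0$, and the telescoping identity $P_h\bigl(1-\pp[h]{i}(T)\bigr)=P_h-P_{h+1}$ are exactly what the definition is built for. Two small points in your optional well-definedness check: in the size-three case, the coincidences $a=c$ and $b=d$ together do not force $a=b$ or $c=d$ (they force size two instead), so your conclusion still holds; and $d\neq a$ at $r=0$ requires the non-attacking condition to include the basement row, which is the standard convention of Haglund--Haiman--Loehr and Ferreira, even though the paper phrases attacking only for boxes of $\dg(\alpha)$.
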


The main result of this section is \cref{theorem:wtqt-prob}.
Define \(\wt(T) = \mathbf{x}^T \wtqt(T)\) for
\begin{equation*}
	\wtqt(T) = q^{\maj(T)} t^{\coinv(T)} \prod_{u \in \dg(\alpha),\ T(u) \neq T(\south{u})} \frac{1 - t}{1 - q^{1 + \leg(u)} t^{1 + \arm(u)}}.
\end{equation*}

\begin{theorem}[adapted from {\cite[Lemma 4.4]{Man24}}] \label{theorem:wtqt-prob}
	The pair of maps
	\begin{gather*}
		{\prob{i}} \colon \NAF(\alpha, \sigma, \beta) \times \NAF(\alpha, \sigma s_i, \beta) \to \rationals(q, t) \\
		{\prob{i}} \colon \NAF(\alpha, \sigma s_i, \beta) \times \NAF(\alpha, \sigma, \beta) \to \rationals(q, t)
	\end{gather*}
	defines a probabilistic bijection
	between \(\NAF(\alpha, \sigma, \beta)\) and \(\NAF(\alpha, \sigma s_i, \beta)\)
	with respect to the weight function \(\wtqt\).
	In other words,
	for all \(T \in \NAF(\alpha, \sigma, \beta)\)
	and \(U \in \NAF(\alpha, \sigma s_i, \beta)\),
	\begin{equation*}
		\wtqt(U)\prob{i}(U, T) = \wtqt(T)\prob{i}(T, U).
	\end{equation*}
\end{theorem}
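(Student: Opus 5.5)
The plan is to prove the identity directly from the definition of \(\prob{i}\), reducing it to a local computation at a single pair of rows. First I record the structure of the support. If \(\prob{i}(T,U)\neq 0\), then \(U=\sswap[0,h]{i}(T)\) for a unique \(h\in\interval{0,\alpha_i}\). Each \(\swap[r]{i}\) is an involution, and the swaps in different rows commute, so \(T=\sswap[0,h]{i}(U)\) with the same \(h\). Hence \(\prob{i}(T,U)\) and \(\prob{i}(U,T)\) are nonzero for the same pairs, with the same cut height \(h\). The identity to prove becomes
\begin{equation*}
\wtqt(T)\Bigl(\prod_{r=0}^{h-1}\pp[r]{i}(T)\Bigr)\bigl(1-\pp[h]{i}(T)\bigr)
=\wtqt(U)\Bigl(\prod_{r=0}^{h-1}\pp[r]{i}(U)\Bigr)\bigl(1-\pp[h]{i}(U)\bigr).
\end{equation*}
We use the convention \(\pp[\alpha_i]{i}=0\); then the case \(h=\alpha_i\) just says that \(U\) is the full swap of the two columns.

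Second, I compare \(\pp[r]{i}(T)\) and \(\pp[r]{i}(U)\) for \(r<h\). In \(U\) both rows \(r\) and \(r+1\) are swapped, so \((a,b,c,d)\) becomes \((b,a,d,c)\), while \(\ell\) and \(A\) are unchanged because \(\alpha_i=\alpha_{i+1}\).
\begin{itemize}
\item When \(|\{a,b,c,d\}|=4\), the identity \(\ind{c,d,x}+\ind{d,c,x}=1\) for distinct \(c,d,x\) shows that each subcase of \cref{table:propagate} is sent to itself.
\item The value \(a=d\) never occurs, since \((i+1,r+1)\) attacks \((i,r)\). The same holds for \(U\), which is non-attacking.
\item Under the swap, the conditions \(b=c\), \(b=d\) and \(a=c\) are permuted among the admissible ones: \(a=c\) becomes \(b=d\), and so on.
\end{itemize}
I expect this to show that \(\prod_{r<h}\pp[r]{i}(T)\) and \(\prod_{r<h}\pp[r]{i}(U)\) agree up to explicit factors. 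Those factors (for instance \(t^{1-\ind{d,a,b}}\) when \(a=c\)) must be absorbed by the change in weight across the interior rows.

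Third, I analyse \(\wtqt(U)/\wtqt(T)\) by sorting the statistics into local pieces.
\begin{itemize}
\item Leg and arm values are identical in columns \(i\) and \(i+1\), and in every other column.
\item For a triple \((u,v,w)\) with \(v\) in another column \(j\), the boxes \(u\) and \(w=\south{u}\) lie in column \(i\) or \(i+1\). This triple changes only when \(u\) is in row \(h+1\), since otherwise \(u,w\) move together to the twin column, whose triples with column \(j\) are of the same type.
\item The only triples internal to the two columns are \(((i,r),(i+1,r),(i,r-1))\), because \((i,r-1)\notin\Arm(i+1,r)\) when \(\alpha_i=\alpha_{i+1}\). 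Each such triple depends only on the rows \(r-1,r\).
\item Descents and the factors \(\tfrac{1-t}{1-q^{\ell+1}t^{A+1}}\) in rows \(\le h\) are permuted between the two columns; those in row \(h+1\) are not.
\end{itemize}
So \(\wtqt(U)/\wtqt(T)\) is a product of a seam factor depending on \((a,b,c,d)\) at rows \(h,h+1\), the triples with third column \(j\) whose top box is in row \(h+1\), and the internal triples of rows \(r\le h\). The triples with third column \(j\) come in pairs whose total \(\ind{\cdot}\) count I expect to be preserved by the non-attacking condition, as in the partition case of \cite{Man24}.

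The main obstacle is the seam at row \(h\). There one must check, case by case through \cref{table:propagate} (four distinct values, \(b=c\), \(b=d\), \(a=c\), and \(a=c,b=d\)), that \((1-\pp[h]{i}(T))/(1-\pp[h]{i}(U))\) equals the ratio of seam weights. The delicate case is \(a=c\) in \(T\), which corresponds to \(b=d\) in \(U\). Here the non-equality factor \(\tfrac{1-t}{1-q^{\ell+1}t^{A+1}}\) appears on one side only. The descent \(q^{\ell+1}\), the coinversion triple and \(\ind{d,a,b}\) must then combine with \(\tfrac{1-q^{\ell+1}t^{A+1}}{1-q^{\ell+1}t^{A+2}}\) from the table; the denominators cancel exactly because the arm of \((i,h+1)\) is \(A+1\). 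I would carry out this check first, mirroring \cite[Lemma~4.4]{Man24} with the composition arm sets.
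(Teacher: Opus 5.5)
Your architecture is the right one: the same cut height $h$ on both sides, interior pairs $r<h$ where $U$'s local quadruple is $(b,a,d,c)$, and a seam at rows $h,h+1$. Your interior observation is also correct: the factor $t^{1-\ind{d,a,b}}$ in $\pp[r]{i}(T)$ is paid for by the internal triple.

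The step you leave as an expectation, however, is false. The external triples whose top box is in row $h+1$ are \emph{not} preserved, and this is exactly where the table's dependence on $A$ comes from. The external parts of $\Arm(i,h+1)$ and $\Arm(i+1,h+1)$ coincide and contain $A$ boxes. For such a box with entry $x$, $T$ contributes $\ind{c,x,a}+\ind{d,x,b}$ inversions and $U$ contributes $\ind{c,x,b}+\ind{d,x,a}$. Expanding $\ind{x,y,z}=\ind{x,y}+\ind{y,z}-\ind{x,z}$, the difference is
\[
D=\ind{c,b}+\ind{d,a}-\ind{c,a}-\ind{d,b},
\]
which does not depend on $x$. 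So these triples change $\inv$ by $A\cdot D$. The non-attacking condition does not force $D=0$: the quadruple $(a,b,c,d)=(1,2,2,3)$ is admissible for both $T$ and $U$ and has $D=-1$. If you drop this term, the seam identity fails in the cases $a=c$ and $b=c$ whenever $A>0$. Nothing else in the seam can produce the power $t^{A}$ in the last column of the table.

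Your seam set-up also needs correcting. At the seam only row $h$ is swapped, so $U$'s quadruple is $(b,a,c,d)$, not $(b,a,d,c)$. Hence:
\begin{itemize}
\item the case $a=c$ in $T$ pairs with $b=c$ in $U$, where $1-\pp[h]{i}(U)=1$, not with $b=d$;
\item the case $b=c$ in $T$ pairs with $a=c$ in $U$;
\item the case $b=d$ in $T$ would make $U$ attacking, consistent with $1-\pp[h]{i}(T)=0$.
\end{itemize}
The relevant quantity at the seam is $1-\pp[h]{i}$. The ratio $\frac{1-q^{\ell+1}t^{A+1}}{1-q^{\ell+1}t^{A+2}}$ comes from $\pp[r]{i}$ and belongs to the interior check.

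Arms are also not equal across the two columns: $\arm(i,r)=\arm(i+1,r)+1$. So interior non-equality factors are not simply permuted. Moving one from column $i+1$ to column $i$ turns $\frac{1-t}{1-q^{\ell+1}t^{A+1}}$ into $\frac{1-t}{1-q^{\ell+1}t^{A+2}}$, and that change is what absorbs the ratio above.

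With correct bookkeeping the seam is uniform. The internal triple changes by $\ind{c,d,a}-\ind{c,d,b}=D$, and the descent count changes by $D$ as well. Hence $\maj(U)-\maj(T)=(\ell+1)D$ and $\coinv(U)-\coinv(T)=(A+1)D$. The cases then close as follows.
\begin{itemize}
\item Four distinct entries: the condition $\ind{c,d,a}=\ind{c,d,b}$ is exactly $D=0$, so stopping is allowed precisely when nothing changes.
\item $a=c$: here $D=\ind{d,a,b}$. The extra factor of arm $A+1$ at $(i,h+1)$ in $U$ matches $1-\pp[h]{i}(T)=(q^{\ell+1}t^{A+1})^{\ind{d,a,b}}\frac{1-t}{1-q^{\ell+1}t^{A+2}}$.
\item $b=c$: this is symmetric, with $D=-\ind{d,b,a}$.
\end{itemize}
Triples whose middle box lies in column $i$ or $i+1$, which you did not list, are harmless: those two boxes lie in the same arms and are swapped together.

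So your local case analysis can be completed. As written, though, the step you call the main obstacle is not carried out, and it rests on a false invariance and the wrong case correspondence.
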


\begin{example}
	Let \(n = 4\),
	\(\alpha = \composition{2, 2, 0, 1}\),
	\(\sigma = \window{3, 1, 2, 4}\),
	\(\beta = \composition{1, 2, 0, 2}\).
	Let \(i = 1\) and note that \(\alpha_1 = \alpha_2 = 2\).
	Moreover, note that \(\sigma s_i = \window{1, 3, 2, 4}\).
	\cref{theorem:wtqt-prob} implies that the maps
	\( \prob{1} \colon \NAF(\alpha, \sigma, \beta) \times \NAF(\alpha, \sigma s_1, \beta) \to \rationals(q, t)\)
	and
	\( \prob{1} \colon \NAF(\alpha, \sigma s_1, \beta) \times \NAF(\alpha, \sigma, \beta) \to \rationals(q, t)\)
	form a probabilistic bijection.

	We have \(
	\NAF(\alpha, \sigma, \beta) = \{
	\textcolor{red!50!black}{T_1},
	\textcolor{red!50!black}{T_2},
	\textcolor{red!50!black}{T_3}
	\}
	\), and \(
	\NAF(\alpha, \sigma s_i, \beta) = \{
	\textcolor{blue!50!black}{U_1},
	\textcolor{blue!50!black}{U_2}
	\}
	\),
	shown in the left and right sides of \cref{fig:prob-2201-3124-1022}, respectively.
	\cref{fig:prob-2201-3124-1022} shows the values of
	\(\prob{1}(T, U)\) and \(\prob{1}(U, T)\) for each pair of fillings.

	\begin{figure}[htbp]
		\centering
		\ytableausetup{smalltableaux}
		\begin{tikzpicture}
			\node[red!50!black]  (T1) at (-4,  2.5) {
				\(T_1 =
				\begin{ytableau}
					2 & 4 \\
					1 & 2 & \none & 4 \\
					3 & 1 & 2 & 4
				\end{ytableau}\)
			};
			\node[red!50!black]  (T2) at (-4,  0) {
				\(T_2 =
				\begin{ytableau}
					2 & 4 \\
					2 & 1 & \none & 4 \\
					3 & 1 & 2 & 4
				\end{ytableau}\)
			};
			\node[red!50!black]  (T3) at (-4, -2.5) {
				\(T_3 =
				\begin{ytableau}
					4 & 2 \\
					1 & 2 & \none & 4 \\
					3 & 1 & 2 & 4
				\end{ytableau}\)
			};
			\node[blue!50!black] (U1) at ( 4,  1.5) {
				\(\begin{ytableau}
					2 & 4 \\
					1 & 2 & \none & 4 \\
					1 & 3 & 2 & 4
				\end{ytableau} = U_1\)
			};
			\node[blue!50!black] (U2) at ( 4, -1.5) {
				\(\begin{ytableau}
					4 & 2 \\
					1 & 2 & \none & 4 \\
					1 & 3 & 2 & 4
				\end{ytableau} = U_2\)
			};
			\node[red!50!black, below, yshift=3pt, xshift=-.5cm] at (T1.south) {
				\(\scriptstyle \wtqt(T_1) =
				\frac{q^{4} {(1 - t)}^{4} t^{3}}{{(1 - q^{2} t^{3})} {(1 - q^{2} t^{2})} {(1 - q t^{2})} {(1 - q t)}}\)
			};
			\node[red!50!black, below, yshift=3pt, xshift=-.5cm] at (T2.south) {
				\(\scriptstyle \wtqt(T_2) =
				\frac{q {(1 - t)}^{2}}{{(1 - q^{2} t^{3})} {(1 - q t)}}\)
			};
			\node[red!50!black, below, yshift=3pt, xshift=-.5cm] at (T3.south) {
				\(\scriptstyle \wtqt(T_3) =
				\frac{q^{3} t^{2} {(1 - t)}^{3} }{{(1 - q^{2} t^{3})} {(1 - q^{2} t^{2})} {(1 - q t^{2})}}\)
			};
			\node[blue!50!black, below, yshift=3pt, xshift=.5cm] at (U1.south) {
				\(\scriptstyle \wtqt(U_1) =
				\frac{q^{2} {(1 - t)}^{3} t}{{(1 - q^{2} t^{2})} {(1 - q t^{2})} {(1 - q t)}}\)
			};
			\node[blue!50!black, below, yshift=3pt, xshift=.5cm] at (U2.south) {
				\(\scriptstyle \wtqt(U_2) =
				\frac{q {(1 - t)}^{2}}{{(1 - q^{2} t^{2})} {(1 - q t^{2})}}\)
			};
			\draw[-{Latex}, red!50!black] (T1)
			edge[bend left=15] node[sloped, fill=white, pos=.35]{\(1\)} (U1);
			\draw[-{Latex}, blue!50!black] (U1)
			edge[bend left=0] node[sloped, fill=white, pos=.35]{\(\tfrac{q^2t^2(1-t)}{1-q^2t^3}\)} (T1);
			\draw[-{Latex}, red!50!black] (T2)
			edge[bend left=10] node[sloped, fill=white, pos=.35]{\(\frac{qt(1-t)}{1-qt^2}\)} (U1);
			\draw[-{Latex}, blue!50!black] (U1)
			edge[bend left=6] node[sloped, fill=white, pos=.35]{\(\frac{1-q^2t^2}{1-q^2t^3}\)} (T2);
			\draw[-{Latex}, red!50!black] (T2)
			edge[bend left=6] node[sloped, fill=white, pos=.35]{\(\frac{qt-1}{1-qt^2}\)} (U2);
			\draw[-{Latex}, blue!50!black] (U2)
			edge[bend left=10] node[sloped, fill=white, pos=.35]{\(\frac{1-q^2t^2}{1-q^2t^3}\)} (T2);
			\draw[-{Latex}, red!50!black] (T3)
			edge[bend left=0] node[sloped, fill=white, pos=.35]{\(1\)} (U2);
			\draw[-{Latex}, blue!50!black] (U2)
			edge[bend left=15] node[sloped, fill=white, pos=.35]{\(\tfrac{q^2t^2(1-t)}{1-q^2t^3}\)} (T3);
		\end{tikzpicture}
		\caption{For $\alpha=2201$, $\sigma=[3,1,2,4]$, $\beta=1202$, we show \(T_i \in \NAF(\alpha,\sigma,\beta)\) and \(U_j \in \NAF(\alpha,\sigma s_1,\beta)\) and their $q,t$-weights. The label of an arrow from \(A\) to \(B\) is \(\prob{1}(A,B)\).}
		\label{fig:prob-2201-3124-1022}
	\end{figure}

	The fact that the maps form a probabilistic bijection can be checked
	by computing that, for each
	\(T \in \{\textcolor{red!50!black}{T_1}, \textcolor{red!50!black}{T_2}, \textcolor{red!50!black}{T_3}\}\)
	and each
	\(U \in \{\textcolor{blue!50!black}{U_1}, \textcolor{blue!50!black}{U_2}\}\),
	we have
	$\prob{1}(T, U) \wtqt(T) = \prob{1}(U, T) \wtqt(U)$.
	Moreover, using the fact that there exists a probabilistic bijection between
	\(\{\textcolor{red!50!black}{T_1}, \textcolor{red!50!black}{T_2}, \textcolor{red!50!black}{T_3}\}\)
	and
	\(\{\textcolor{blue!50!black}{U_1}, \textcolor{blue!50!black}{U_2}\}\),
	\cref{theorem:wtqt-prob} implies that their \(q, t\)-weight generating functions are equal:
	\begin{equation*}
		[x^\beta]E_\alpha^\sigma=\wtqt(\textcolor{red!50!black}{T_1})
		+ \wtqt(\textcolor{red!50!black}{T_2})
		+ \wtqt(\textcolor{red!50!black}{T_3})
		= \wtqt(\textcolor{blue!50!black}{U_1})
		+ \wtqt(\textcolor{blue!50!black}{U_2})=[x^\beta]E_\alpha^{\sigma s_i}.
	\end{equation*}
\end{example}

\cref{theorem:main} now follows from \cref{theorem:wtqt-prob} by applying \cref{proposition:equal-weight-sum} to obtain $[x^\beta]E_\alpha^\sigma=[x^\beta]E_\alpha^{\sigma s_i}$ for each content $\beta$.

\section{Bijection preserving major index and inversion number} \label{sec:maj inv}

To prove \cref{theorem:three-term}, we start by returning to the general set of fillings $\Fill(\alpha)$ and construct a weight-preserving bijection that exchanges two adjacent columns producing a filling in $\Fill(s_i\alpha)$. This bijection is equivalent to \cite[Proposition~4.3]{KLO22}, though we give a simplified presentation. In what follows, we work with fillings over an arbitrary totally ordered alphabet. Fix a composition $\alpha$ and an index $i\in[n-1]$ such that $\alpha_i>\alpha_{i+1}\coloneqq m$.

Our goal is to build a bijection that preserves row content and the statistics $\maj$ and $\inv$. Thus we must determine which rows swap entries when the two columns are exchanged, focusing on maintaining a ``local'' preservation of the $\inv$ statistic.  Since Type~I triples in columns $i,i+1$ of $T$ become Type~II triples in its image $\phi(T)$, we aim to match the inversion types of the corresponding triples between $T$ and $\phi(T)$. In many cases, for local preservation of $\inv$, the necessary swaps between consecutive rows are interdependent; we capture this dependence by grouping such rows into \vocab{blocks}, which will correspond to sets of adjacent rows that must be swapped simultaneously.

\begin{definition}
	For a filling \(T \in \Fill(\alpha)\) and $r\in\interval{m-1}$, we say that the rows \(r\) and \(r+1\) are \vocab{intertwined} if
	\( 	\ind{a, c, d} \neq \ind{b, c, d} \), where  \(a = T(i, r+1)\), \(b = T(i+1, r+1)\), \(c = T(i, r)\), and \(d = T(i+1, r)\). A \vocab{block} is a maximal consecutive set of intertwined rows. Equivalently, $\interval{m}$ decomposes uniquely into blocks such that consecutive rows are in different blocks if and only if they are not intertwined. See \Cref{fig:blocks}.
\end{definition}

\begin{figure}[htbp]
	\centering
	\begin{minipage}[b][][b]{.48\textwidth}
		\centering
		\begin{tikzpicture}[scale=.4]
			\node at (-0.5, 7.5) {\tiny \(7\)};
			\node at (-0.5, 6.5) {\tiny \(6\)};
			\node at (-0.5, 5.5) {\tiny \(5\)};
			\node at (-0.5, 4.5) {\tiny \(4\)};
			\node at (-0.5, 3.5) {\tiny \(3\)};
			\node at (-0.5, 2.5) {\tiny \(2\)};
			\node at (-0.5, 1.5) {\tiny \(1\)};
			\node[anchor=north] at (0.5, 1) {\(\scriptstyle i\)};
			\node[anchor=north] at (1.5, 1) {\(\scriptstyle i+1\)};
			\draw (0, 7) rectangle ++(1, 1);
			\draw (1, 6) rectangle ++(1, 1)[fill=gray!50];
			\draw (0, 6) rectangle ++(1, 1)[fill=gray!50];
			\draw (1, 5) rectangle ++(1, 1)[fill=gray!50];
			\draw (0, 5) rectangle ++(1, 1)[fill=gray!50];
			\draw (1, 4) rectangle ++(1, 1);
			\draw (0, 4) rectangle ++(1, 1);
			\draw (1, 3) rectangle ++(1, 1);
			\draw (0, 3) rectangle ++(1, 1);
			\draw (1, 2) rectangle ++(1, 1);
			\draw (0, 2) rectangle ++(1, 1);
			\draw (1, 1) rectangle ++(1, 1)[fill=gray!50];
			\draw (0, 1) rectangle ++(1, 1)[fill=gray!50];
			\node at (0.5, 7.5) {\(3\)};
			\node at (1.5, 6.5) {\(5\)};
			\node at (0.5, 6.5) {\(2\)};
			\node at (1.5, 5.5) {\(1\)};
			\node at (0.5, 5.5) {\(4\)};
			\node at (1.5, 4.5) {\(8\)};
			\node at (0.5, 4.5) {\(5\)};
			\node at (1.5, 3.5) {\(6\)};
			\node at (0.5, 3.5) {\(1\)};
			\node at (1.5, 2.5) {\(7\)};
			\node at (0.5, 2.5) {\(3\)};
			\node at (1.5, 1.5) {\(6\)};
			\node at (0.5, 1.5) {\(4\)};
			\draw[decorate, decoration={brace, amplitude=5pt, mirror, raise=5pt}] (2, 5.1) -- (2, 6.9) node[midway, anchor=west, xshift=10pt] {third block};
			\draw[decorate, decoration={brace, amplitude=5pt, mirror, raise=5pt}] (2, 2.1) -- (2, 4.9) node[midway, anchor=west, xshift=10pt] {second block};
			\draw[decorate, decoration={brace, amplitude=5pt, mirror, raise=5pt}] (2, 1.1) -- (2, 1.9) node[midway, anchor=west, xshift=10pt] {first block};
		\end{tikzpicture}
		\caption{Columns \(i\) and \(i+1\) of a filling \(T\) with three blocks.}
		\label{fig:blocks}
	\end{minipage}%
	\hspace{-.04\textwidth}%
	\begin{minipage}[b][][b]{.56\textwidth}
		\centering
		\begin{tikzpicture}[scale=.45]
			\draw (0, 0) rectangle ++(1, 1);
			\draw (1, 0) rectangle ++(1, 1);
			\draw[fill=gray!50] (0, 1) rectangle ++(1, 1);
			\draw[fill=gray!50] (1, 1) rectangle ++(1, 1);
			\fill[gray!50] (0, 2) rectangle ++(2, 2);
			\node at (1, 3.25) {\(\vdots\)};
			\draw[fill=gray!50] (0, 4) rectangle ++(1, 1);
			\draw[fill=gray!50] (1, 4) rectangle ++(1, 1);
			\draw (0, 5) rectangle ++(1, 1);
			\draw (1, 5) rectangle ++(1, 1);
			\node at (0.5, 5.5) {\(e\)};
			\node at (1.5, 5.5) {\(f\)};
			\node at (0.5, 4.5) {\(a\)};
			\node at (1.5, 4.5) {\(b\)};
			\node at (0.5, 1.5) {\(c\)};
			\node at (1.5, 1.5) {\(d\)};
			\node at (0.5, 0.5) {\(g\)};
			\node at (1.5, 0.5) {\(h\)};
		\end{tikzpicture}
		\caption{A block of a filling \(T\), with the entries \(a, b, c, d, e, f, g, h\) indicated.}
		\label{fig:general-block}
	\end{minipage}
\end{figure}

We construct a filling \(\phi(T) \in \Fill(s_i \cdot \alpha)\) derived from \(T\), starting by copying all entries in $T$ outside of columns $i$ and $i+1$. Then, for the upper rows $\interval{m+1,\alpha_i}$, we copy the entries from column $i$ of $T$ to column $i+1$ of $\phi(T)$. What remains is to determine the entries in columns $i,i+1$ in rows $[m]$ of $\phi(T)$; this reduces to choosing which blocks are swapped and which remain unchanged, where \emph{swapping a block} means swapping the entries between columns $i$ and $i+1$ within the block.

Focusing on a block of \(T\), let \((a,b)\) and $(c,d)$ be the entries  in columns \((i,i+1)\) in the top row and bottom row of the block, respectively.
Let \((e,f)\) and $(g,h)$ be the entries in columns \((i,i+1)\) in the row immediately above the block and in the row immediately below the block, respectively. $f$ may be undefined if it doesn't exist, and we set $g=h=\infty$ if the block is the bottommost block.
See \Cref{fig:general-block} for an illustration.

Then, we place the block in $\phi(T)$ by swapping the block if \(\ind{c, d, g} \neq \ind{e, a, b}\), and leaving it unchanged otherwise.
Non-intertwinedness ensures that the $\inv$ contributions of distinct blocks are independent; thus the total contribution to $\inv$ is preserved for each block. \Cref{fig:output-filling} shows the procedure with the filling from \Cref{fig:blocks} as the input.

The inversion number is preserved between $T$ and $\phi(T)$, by construction. Miraculously, the major index is preserved as well, which can be verified with a case analysis.

\begin{figure}[htbp]
	\centering
	\begin{tikzpicture}[scale=.47]
		\node at (-1.9,4.5) {$\phi(T)=$};
		\draw (1, 7) rectangle ++(1, 1);
		\draw (1, 6) rectangle ++(1, 1)[fill=gray!50];
		\draw (0, 6) rectangle ++(1, 1)[fill=gray!50];
		\draw (1, 5) rectangle ++(1, 1)[fill=gray!50];
		\draw (0, 5) rectangle ++(1, 1)[fill=gray!50];
		\draw (1, 4) rectangle ++(1, 1);
		\draw (0, 4) rectangle ++(1, 1);
		\draw (1, 3) rectangle ++(1, 1);
		\draw (0, 3) rectangle ++(1, 1);
		\draw (1, 2) rectangle ++(1, 1);
		\draw (0, 2) rectangle ++(1, 1);
		\draw (1, 1) rectangle ++(1, 1)[fill=gray!50];
		\draw (0, 1) rectangle ++(1, 1)[fill=gray!50];
		\node at (1.5, 7.5) {\(3\)};
		\node at (1.5, 6.5) {\(5\)};
		\node at (0.5, 6.5) {\(2\)};
		\node at (1.5, 5.5) {\(1\)};
		\node at (0.5, 5.5) {\(4\)};
		\node at (1.5, 4.5) {\(5\)};
		\node at (0.5, 4.5) {\(8\)};
		\node at (1.5, 3.5) {\(1\)};
		\node at (0.5, 3.5) {\(6\)};
		\node at (1.5, 2.5) {\(3\)};
		\node at (0.5, 2.5) {\(7\)};
		\node at (1.5, 1.5) {\(6\)};
		\node at (0.5, 1.5) {\(4\)};
		\draw[decorate, decoration={brace, amplitude=5pt, mirror, raise=5pt}] (2, 5.1) -- (2, 6.9) node[midway, anchor=west, xshift=10pt] {unchanged, since \(\ind{4, 1, 5} = 1 = \ind{3, 2, 5}\)};
		\draw[decorate, decoration={brace, amplitude=5pt, mirror, raise=5pt}] (2, 2.1) -- (2, 4.9) node[midway, anchor=west, xshift=10pt] {swapped, since \(\ind{2, 5, 8} = 0 \neq 1 = \ind{3, 7, 4}\)};
		\draw[decorate, decoration={brace, amplitude=5pt, mirror, raise=5pt}] (2, 1.1) -- (2, 1.9) node[midway, anchor=west, xshift=10pt] {unchanged, since \(\ind{4, 6, \infty} = 0 = \ind{7, 4, 6}\)};

		\begin{scope}[xshift=22cm]
			\node at (-1.9,4.5) {$\psi(T)=$};
			\draw (1, 7) rectangle ++(1, 1);
			\draw (1, 6) rectangle ++(1, 1)[fill=gray!50];
			\draw (0, 6) rectangle ++(1, 1)[fill=gray!50];
			\draw (1, 5) rectangle ++(1, 1)[fill=gray!50];
			\draw (0, 5) rectangle ++(1, 1)[fill=gray!50];
			\draw (1, 4) rectangle ++(1, 1);
			\draw (0, 4) rectangle ++(1, 1);
			\draw (1, 3) rectangle ++(1, 1);
			\draw (0, 3) rectangle ++(1, 1);
			\draw (1, 2) rectangle ++(1, 1);
			\draw (0, 2) rectangle ++(1, 1);
			\draw (1, 1) rectangle ++(1, 1)[fill=gray!50];
			\draw (0, 1) rectangle ++(1, 1)[fill=gray!50];
			\node at (1.5, 7.5) {\(3\)};
			\node at (1.5, 6.5) {\(5\)};
			\node at (0.5, 6.5) {\(2\)};
			\node at (1.5, 5.5) {\(1\)};
			\node at (0.5, 5.5) {\(4\)};
			\node at (1.5, 4.5) {\(5\)};
			\node at (0.5, 4.5) {\(8\)};
			\node at (1.5, 3.5) {\(1\)};
			\node at (0.5, 3.5) {\(6\)};
			\node at (1.5, 2.5) {\(3\)};
			\node at (0.5, 2.5) {\(7\)};
			\node at (1.5, 1.5) {\(4\)};
			\node at (0.5, 1.5) {\(6\)};
		\end{scope}
	\end{tikzpicture}
	\caption{Columns \(i, i+1\) of the fillings \(\phi(T)\) and $\psi(T)$ derived from \(T\) in \cref{fig:blocks}.}
	\label{fig:output-filling}
\end{figure}

\begin{theorem}[{\cite[Proposition~4.3]{KLO22}}] \label{thm:phi}
	The map \(\phi \colon \Fill(\alpha) \to \Fill(s_i \cdot \alpha)\) is a bijection satisfying \(\mathbf{x}^{|T|} = \mathbf{x}^{|\phi(T)|}\), \(\maj(T) = \maj(\phi(T))\), and \(\inv(T) = \inv(\phi(T))\) for all \(T \in \Fill(\alpha)\).
\end{theorem}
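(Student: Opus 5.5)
We will prove \cref{thm:phi} in three steps: bijectivity, the $\inv$ identity, and the $\maj$ identity. Content preservation is immediate, since $\phi$ only moves entries within rows.

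\emph{Step 1: bijectivity.} We first check that the block decomposition is an invariant of the pair $(T,\phi(T))$. Whether rows $r,r+1$ are intertwined depends on $\ind{a,c,d}$ and $\ind{b,c,d}$. Swapping both rows replaces these by $\ind{b,d,c}$ and $\ind{a,d,c}$. Using the cyclic symmetry $\ind{x,y,z}=\ind{y,z,x}$ and the complement rule $\ind{x,y,z}+\ind{x,z,y}=1$ for distinct letters (ties handled by the same convention), the condition ``$\ind{a,c,d}\neq\ind{b,c,d}$'' is unchanged. Swapping only one of the two rows also leaves it unchanged, which is exactly what makes the construction well defined. Hence $\phi(T)$ has the same blocks as $T$, viewed now in shape $s_i\alpha$, where the tall column is $i+1$.

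We then define the inverse $\psi$ block by block. For each block of $\phi(T)$ we read off the same boundary data $e,g,h$ (now $e$ sits in column $i+1$) and decide whether to swap via the analogous condition. The point to check is that the swap criterion $\ind{c,d,g}\neq\ind{e,a,b}$ evaluates identically in $T$ and in $\phi(T)$. Swapping a block exchanges $(a,b)$ and $(c,d)$ simultaneously, and the neighbouring rows $g,h$ may themselves have been swapped, since they belong to a different block. I expect this to reduce to the identity $\ind{x,y,z}=1-\ind{x,z,y}$ applied to both sides. So $\psi\circ\phi=\mathrm{id}$, and by symmetry $\phi\circ\psi=\mathrm{id}$.

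\emph{Step 2: $\inv$.} Triples not involving both columns $i,i+1$ are unaffected. We must also handle triples linking one of these columns to a third column $j$: the arm sets change because the heights of columns $i,i+1$ are exchanged. Here row contents are preserved, and the Type I/II conditions are designed so that for each row the multiset of relevant comparisons is unchanged; I would verify this by a short case split on $\alpha_j$ relative to $\alpha_{i+1}<\alpha_i$. The main part is the triples inside columns $i,i+1$, whose contributions we sum row pair by row pair.
\begin{itemize}
\item Within a block, swapping preserves each internal contribution by the cyclic symmetry of $\ind{\cdot,\cdot,\cdot}$.
\item Between two consecutive non-intertwined rows, the contribution is independent of whether one row is swapped relative to the other.
\item At the top of the short column, the contribution involving $e$ and the rows above is handled by the top boundary term $\ind{e,a,b}$.
\item At the bottom, the contribution involving $g,h$ is handled by $\ind{c,d,g}$.
\end{itemize}
The swap criterion is exactly what makes these two boundary terms balance. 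I would write the total inversion count in columns $i,i+1$ as a sum over blocks of boundary terms and compare.

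\emph{Step 3: $\maj$.} This is the hard step, which the paper calls a ``miracle.'' Descents in rows $\le m$ carry legs $\alpha_i-r$ and $m-r$ depending on the column, and these exchange under $\phi$. The entries above row $m$ move from column $i$ to column $i+1$, keeping their legs, so those descents are preserved except at the interface row $m+1$. So only descents at block boundaries and at the interface $(e,a)$ matter: inside a swapped block, the vertical pairs move together with their column labels, and the leg change from moving a descent between columns must cancel. The first thing I would try is to show that the leg difference between the two columns, which is $\alpha_i-\alpha_{i+1}$ at every row, telescopes. That is, I would show that within each block, the number of descents in column $i$ minus the number in column $i+1$ changes under swapping only through the bottom boundary $(g,h)$ and the top boundary $(e,f)$. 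The swap condition should then force these net changes to cancel. The check will be a finite case analysis over the relative order of $a,b,e$ and of $c,d,g$; I expect it to be the bulk of the work.
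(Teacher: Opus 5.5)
Your outline (blocks are invariant, the swap rule can be re-evaluated on $\phi(T)$, inversions are compared block by block with the rule balancing the two boundary triples, $\maj$ by a boundary case analysis) matches the sketch the paper gives before deferring to KLO22. However, one step fails as stated, and the identity that makes the construction work is missing.

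\textbf{The step that fails: triples through a third column $j$ in Step~2.} Put $m=\alpha_{i+1}$. Suppose $j<i$ with $m\le\alpha_j<\alpha_i$, or $j>i+1$ with $m<\alpha_j\le\alpha_i$. Then one type of triple through column $j$ sees only the \emph{short}-column entry of a row. The other type sees a vertical pair in the \emph{tall} column. Neither is determined by row contents.

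Example: take $\alpha=(1,2,1)$, $i=2$, $T(1,1)=2$, $T(2,1)=1$, $T(2,2)=4$, $T(3,1)=3$. The block $\{1\}$ is unchanged, since $\chi_{1,3,\infty}=0=\chi_{4,1,3}$. The triples through column $1$ contribute $\chi_{2,3,\infty}+\chi_{4,2,1}=0+1$ in $T$, but $\chi_{2,1,\infty}+\chi_{4,2,3}=1+0$ in $\phi(T)$.

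In general, expand $\chi_{x,y,z}=\chi_{x,y}+\chi_{y,z}-\chi_{x,z}$. The pairwise comparisons with column $j$ then match, but a term $-\sum_r \chi_{T(i,r),T(i,r-1)}$ is left over. So each such column shifts $\inv$ by the change in the number of tall-column descents in rows $2,\dots,m+1$. Thus $\inv(T)=\inv(\phi(T))$ is not local to columns $i,i+1$. It needs that descent count to be invariant, which is a $\maj$-type fact (it is the part of Step~3 that absorbs the leg difference $\alpha_i-m$). So Step~2 cannot be closed before Step~3.

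For $\alpha_j<m$ there is a similar issue. The leftover is the total number of descents between two consecutive rows. Swapping one of the rows $(a,b)$ over $(c,d)$ changes it by $\chi_{a,c,d}-\chi_{b,c,d}$. What makes this vanish is non-intertwinedness at block boundaries, not preservation of row contents.

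\textbf{The missing identity.} For intertwined rows $(a,b)$ over $(c,d)$,
$\chi_{a,b}+\chi_{b,d}=\chi_{a,c}+\chi_{c,d}$. It drives all three steps.

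\emph{Step~2, first bullet.} The triple of $T$ at a row pair is $\chi_{a,b,c}$. The triple of $\phi(T)$ is $\chi_{b,c,d}$ if the block is unchanged, or $\chi_{a,d,c}$ if it is swapped. These use different cells, so cyclic symmetry cannot relate them. Their equality is exactly this identity, or the same identity with $c$ and $d$ interchanged.

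\emph{Step~3.} In an \emph{unchanged} block, the vertical pairs of column $i$ move from the tall to the short column. Their legs drop by $\alpha_i-m$, and conversely for column $i+1$. So interior descents do matter, contrary to your claim that only boundary descents matter. Summing the identity over a block with bottom row $(c,d)$ and top row $(a,b)$ gives interior descents in column $i+1$ minus those in column $i$ equal to $\chi_{c,d}-\chi_{a,b}$. Only then does your case analysis reduce to boundary data.

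\emph{Step~1.} The complement rule only handles swapping the block itself. Invariance of the criterion under swapping the neighbouring blocks needs $\chi_{c,d,g}=\chi_{c,d,h}$ and $\chi_{e,a,b}=\chi_{f,a,b}$. These are precisely the non-intertwinedness of the two boundary row pairs. The same fact lets you replace $e$ by whichever of $e,f$ sits in column $i+1$ of $\phi(T)$ when checking the top boundary triple in Step~2.
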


As an auxiliary construction, define \(\psi(T)\) to be the filling obtained from \(\phi(T)\) by swapping the bottommost block.
The map \(\psi\) changes the statistics \(\inv\) and \(\maj\) in a controlled way, as follows.

\begin{theorem}[{\cite[17]{KLO22}}] \label{thm:psi}
	Assume \(\alpha_i > \alpha_{i+1}\).
	The map \(\psi \colon \Fill(\alpha) \to \Fill(s_i \cdot \alpha)\) is a bijection such that, for all \(T \in \Fill(\alpha)\) satisfying \(\ind{{T(i, 1)},{T(i+1, 1)}} = 1\), we have \(\mathbf{x}^{|T|} = \mathbf{x}^{|\psi(T)|}\), and
	\begin{equation*}
		(\inv(T),\maj(T)) =
		\begin{cases}
			(\inv(\psi(T)) + A,\,\maj(\psi(T)) - (L+1)) & \text{if } \ind{{\psi(T)(i, 1)},{\psi(T)(i+1, 1)}} = 1\,, \\
			(\inv(\psi(T)) + 1,\,\maj(\psi(T)))         & \text{if } \ind{{\psi(T)(i, 1)},{\psi(T)(i+1, 1)}} = 0\,.
		\end{cases}
	\end{equation*}
\end{theorem}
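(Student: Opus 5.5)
We would prove \cref{thm:psi} by reducing it to \cref{thm:phi}. The maps \(\phi\) and \(\psi\) agree everywhere except on the bottommost block, which in \(\psi(T)\) is swapped relative to \(\phi(T)\). So the statistics of \(\psi(T)\) differ from those of \(\phi(T)\) only through the triples and descents involving rows of the bottommost block and the row just above it.

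\textbf{Bijectivity and content.} First we would check that \(\psi\) is a bijection. Block structure is intrinsic to columns \(i,i+1\) and is unchanged by swapping a whole block: intertwinedness \(\ind{a,c,d}\ne\ind{b,c,d}\) is symmetric under swapping both rows. Hence the bottommost block of \(\phi(T)\) is identified from \(\phi(T)\) alone, and \(\psi\) is \(\phi\) followed by an involution on \(\Fill(s_i\cdot\alpha)\). Content is preserved trivially, since only entries within rows are permuted.

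\textbf{Effect on \(\inv\).} Let the bottommost block occupy rows \([1,k]\), with entries \((c,d)\) in row \(1\), \((a,b)\) in row \(k\), and \((e,f)\) in row \(k+1\); the basement sits below, with \(g=h=\infty\) in the convention of the construction. The point of the convention \(g=h=\infty\) is that \(\phi\) made its swap decision for this block against the sentinel. The real basement entries instead contribute the factor \(\ind{c,d}\), or the swapped version, through the Type I/II triples in row \(1\).

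1. We would verify that inside the block, internal triple contributions are invariant under a simultaneous block swap. This should follow because every consecutive pair of rows in the block is intertwined, giving a telescoping identity of \(\ind{\cdot,\cdot,\cdot}\) values.
2. Only the boundary triples then change: those pairing row \(k+1\) with row \(k\), and those pairing row \(1\) with the basement.
3. We would enumerate these using the identities \(\ind{x,y,z}=\ind{x,y}+\ind{y,z}-\ind{x,z}\). The hypothesis \(\ind{T(i,1),T(i+1,1)}=1\) fixes the orientation of the bottom row of \(T\).
4. Splitting on whether the bottom row of \(\psi(T)\) is a descent pair, the change in the inv-count is either \(1\) (a single triple with a basement box flips) or \(A\).

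The count \(A\) arises because, when the bottom row of \(\psi(T)\) is increasing in the opposite sense, the box \((i+1,1)\), which sits under the cell \(u\) of the statement, now has all its arm triples change type. Here \(A\) and \(L\) are the arm and leg of that box in \(s_i\cdot\alpha\).

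\textbf{Effect on \(\maj\), and the main obstacle.} Descents change only at the bottom row relative to the basement, and at row \(k+1\) relative to row \(k\). In the first case the box in row \(1\) of column \(i+1\) has leg \(L\) in \(s_i\cdot\alpha\), giving the shift \(L+1\). We expect the main obstacle to be showing that the row \(k+1\) descent changes cancel exactly. These would be checked by the same case analysis behind the ``miraculous'' maj-preservation in \cref{thm:phi}, organised by the relative order of \(e,a,b\) together with the rule \(\ind{c,d,g}\ne\ind{e,a,b}\). Because of the \(g=\infty\) convention, the swap rule differs from what the true basement would dictate precisely by the \(\ind{c,d}\) term, and that discrepancy should isolate the two stated cases. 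We would first try to carry out this reduction with a small table over the six relative orders of \(a,b,e\), then handle the case where \(f\) is undefined (the block reaches row \(m\)) separately.
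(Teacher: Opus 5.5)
Your bijectivity argument is correct. Intertwinedness is unchanged when either of the two rows is swapped, so blocks are intrinsic, and $\psi$ is $\phi$ followed by an involution of $\Fill(s_i\cdot\alpha)$. Comparing $\phi(T)$ with $\psi(T)$ and invoking \cref{thm:phi} is also a viable route. The paper itself does not argue this way: it imports the statement from \cite{KLO22}, where both maps are built recursively from the bottom row and the two theorems are proved together by induction.

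The gap is in your accounting of what swapping the bottom block changes, which is wrong in three places. Write $\gamma=s_i\cdot\alpha$, $M=\alpha_i$, $m=\alpha_{i+1}$. Let $(a_r,b_r)$ be the entries of $\phi(T)$ in columns $i,i+1$, set $s_r=\chi_{a_r,b_r}$, and let the bottom block be rows $1,\dots,k$, with row $0$ the $\infty$ sentinel.

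\emph{Intermediate columns.} Take a column $j$ with $m\le\gamma_j<M$ if $j<i$, or $m<\gamma_j\le M$ if $j>i+1$. It meets column $i$ through triples of one type and column $i+1$ through triples of the other type. So these triples are not permuted by the row swaps, and they change in every row of the block, not only at the boundary. Telescoping over the block cancels the entries of column $j$ and leaves a change of $-N$ in $\inv$, where $N=\sum_{r=2}^{k}(\chi_{a_r,a_{r-1}}-\chi_{b_r,b_{r-1}})+\chi_{b_{k+1},a_k}-\chi_{b_{k+1},b_k}$ is the change in the number of descents of column $i+1$ in rows $2,\dots,k+1$. There are exactly $A-1$ such columns, because $A$ is the arm of $u=(i+1,\alpha_{i+1}+1)\in\dg(\gamma)$ and $L=\leg(u)$, as in \cref{theorem:three-term}. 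They are not the arm and leg of $(i+1,1)$.

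\emph{Internal descents.} Inside the block, descent indicators move between columns $i$ and $i+1$, whose legs differ by $M-m=L+1$. So internal rows do change $\maj$. Using non-intertwinedness at row $k+1$ to merge the two row-$(k+1)$ terms gives $\maj(\psi(T))-\maj(\phi(T))=(L+1)N$.

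\emph{Boundary rows.} Row $1$ contributes nothing to $\maj$, and the row-$(k+1)$ changes do not cancel. For $\alpha=(2,1)$ with $T(i+1,1)<T(i,2)<T(i,1)$, the whole shift $L+1=1$ is the new descent at $u=(i+1,2)$. The whole $A=1$ is the cross triple $((i+1,2),(i,1),(i+1,1))$.

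The missing idea is the lemma identifying $N$: under the hypothesis, $N=\chi_{\psi(T)(i,1),\psi(T)(i+1,1)}$. Two facts give it. First, in the paper's block notation for $T$, take $g=\infty$ and $\chi_{c,d}=1$. Then the rule for the bottom block reads ``swap iff $\chi_{e,a,b}=0$''. Since $b_{k+1}\in\{e,f\}$ (with $b_{k+1}=e$ if $k=m$) and $\chi_{e,a,b}=\chi_{f,a,b}$, this forces $\chi_{b_{k+1},a_k,b_k}=1$ in $\phi(T)$. Hence the cross triple at row $k+1$ always drops by exactly $1$, and $\chi_{b_{k+1},a_k}-\chi_{b_{k+1},b_k}=1-s_k$. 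Second, one has $\chi_{a_r,a_{r-1}}-\chi_{b_r,b_{r-1}}=s_r-s_{r-1}-\chi_{a_r,b_r,a_{r-1}}+\chi_{b_r,a_{r-1},b_{r-1}}$. For intertwined rows the two triple terms coincide: for distinct entries, $\chi_{x,y,z}$ is a cyclic orientation, and intertwined means $a_{r-1},b_{r-1}$ separate $a_r$ from $b_r$. So $N$ telescopes to $1-s_1$, which is the orientation of row $1$ of $\psi(T)$.

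This yields $\inv(T)-\inv(\psi(T))=1+(A-1)N$ and $\maj(\psi(T))-\maj(T)=(L+1)N$, which are exactly the two stated cases. The telescoping you anticipate in your step 1 is needed, but for the intermediate columns and for $N$, and its output is the case split rather than invariance. Inside the block, only the triples between columns $i$ and $i+1$, and those with columns taller or shorter than both, are preserved.
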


\begin{remark}
	\textcite{KLO22} define $\phi$ and $\psi$ by a recursive procedure that examines the filling from the bottom row upward and may invoke either map at intermediate steps.
	Their definition is less transparent, but is well suited to induction.
	Their recursive constructions produce the same bijections as our nonrecursive descriptions.
\end{remark}

\begin{remark}
	As a corollary, we get a simple bijective proof of \cite[Theorem~5.1.1]{HHL08}.
\end{remark}

\subsection{Shape changing identity}

In this subsection, we establish \cref{theorem:three-term}.
Each permuted-basement Macdonald polynomial admits a signed filling expansion as in \cref{lem:signed}, thus \eqref{eq:three term} is equivalent to
\begin{equation} \label{eq:Fill pm}
	\sum_{T\in\Fill_\pm(\alpha,\sigma)}\wt_\pm(T)
	=
	\sum_{T\in\Fill_\pm(s_i\alpha,\sigma s_i)}\wt_\pm(T)
	+
	c_{i,\alpha,\sigma}(q, t) \sum_{T\in\Fill_\pm(s_i\alpha,\sigma)}\wt_\pm(T).
\end{equation}
We relate each element of the left-hand sum to a pair of elements in the right-hand sums, using \cref{thm:phi,thm:psi} as building blocks.

Using the signed $\chi$ function, we extend \(\phi\) and \(\psi\) to maps on signed augmented fillings \(\Fill_\pm(\alpha,\sigma)\).
Because each map may or may not interchange the \(i\)\textsuperscript{th} and \((i+1)\)\textsuperscript{th} basement entries, their natural domain is
\(\Fill_\pm(s_i\alpha,\sigma)\,\cup\,\Fill_\pm(s_i\alpha,\sigma s_i).\)
Nevertheless, \(\psi(T)\) is obtained from \(\phi(T)\) by swapping the bottommost block, so exactly one of \(\phi(T)\) and \(\psi(T)\) lies in \(\Fill_\pm(s_i\alpha,\sigma s_i)\), and the other lies in \(\Fill_\pm(s_i\alpha,\sigma)\).
Hence for each \(T\) there exist unique
\begin{equation*}
	f(T)\in \Fill_\pm(s_i\alpha,\sigma s_i),\qquad
	g(T)\in \Fill_\pm(s_i\alpha,\sigma)
\end{equation*}
such that \(\{f(T),g(T)\}=\{\phi(T),\psi(T)\}\).
The bijections \(f\) and \(g\) associate, to each \(T\in\Fill_\pm(\alpha,\sigma)\), a pair of fillings appearing on the right side of \eqref{eq:Fill pm}.
\cref{lemma:T-fT-gT}, proved using \cref{thm:psi}, asserts that their weights match the required contributions in \eqref{eq:Fill pm}.

\begin{lemma} \label{lemma:T-fT-gT}
	For all \(T \in \Fill_\pm(\alpha, \sigma)\), we have
	\begin{equation*}
		\wt_\pm(T)
		=
		\wt_\pm(f(T))
		+
		c_{i, \alpha, \sigma}(q, t)
		\wt_\pm(g(T)),
	\end{equation*}
	where \(c_{i, \alpha, \sigma}(q, t)\) is as in \cref{theorem:three-term}.
\end{lemma}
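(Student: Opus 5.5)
The plan is to compare weights directly using the signed-filling weight of \cref{lem:signed}, splitting into cases according to the bottom row of columns \(i,i+1\). The basement entries are \(\sigma_i,\sigma_{i+1}\), and they lie in the bottommost block. First I would record what is common to \(T\), \(\phi(T)\) and \(\psi(T)\).

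\textbf{Common factors.} The three fillings have the same content, so they have the same \(\mathbf{x}^{|\cdot|}\) and the same \(\operatorname{neg}\). The denominators \(\prod_{u}(1-q^{1+\leg(u)}t^{1+\arm(u)})\) for shapes \(\alpha\) and \(s_i\alpha\) agree except at one box. Exchanging the columns should permute the multiset of pairs \((\leg,\arm)\), except that the box \(u=(i+1,\alpha_{i+1}+1)\in\dg(s_i\alpha)\) has arm one less than its partner \((i,\alpha_{i+1}+1)\in\dg(\alpha)\). This is because the short column now lies to the left of the tall one, so the strict and weak inequalities in \(\Arm\) trade places. I would verify this carefully; it is the source of the denominator \(1-q^{\leg(u)+1}t^{\arm(u)}\) in \(c_{i,\alpha,\sigma}\).

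\textbf{Converting statistics.} Next I would convert \(\inv\) into \(\coinv\). Since \(\coinv=\#\text{triples}-\inv\), and the number of triples changes by a fixed amount between the two shapes, I expect \(t^{\coinv}\) to differ by a power of \(t\) that accounts for the factor \(t^{\twinv(s_i\alpha,\sigma)-\twinv(\alpha,\sigma)}\). This comes from the Type II triples with \(w\) in the basement, whose contributions depend on \(\sigma\) through \(\twinv\). \cref{thm:phi} and \cref{thm:psi} are stated for \(\Fill(\alpha)\), with the basement treated as row \(1\) of an arbitrary totally ordered alphabet. So I would apply them to the augmented filling, regarding row \(0\) as the bottom row, and use the signed \(\chi\) to handle ties between equal negative letters.

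\textbf{Case analysis.} I would split on the basement comparison, the case \(\sigma_i>\sigma_{i+1}\) versus \(\sigma_i<\sigma_{i+1}\), and on the bottom row of \(\phi(T)\).

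If \(\ind{\sigma_i,\sigma_{i+1}}=1\), \cref{thm:psi} applies directly. Depending on whether the bottom block of \(\phi(T)\) was swapped, \(\phi(T)\) is either \(f(T)\) or \(g(T)\), and \(\psi(T)\) is the other. Then \(\phi\) preserves \(\maj\) and \(\inv\), while \(\psi\) shifts them by \((A,-(L+1))\) or \((1,0)\), where \(L=\leg(u)\) and \(A=\arm(u)\). Writing \(x=q^{L+1}t^{A}\), the identity reduces to one of the scalar identities
\[
\frac{1}{1-xt}=\frac{1}{1-x}\Bigl(\cdots\Bigr)+\frac{1-t}{1-x}\cdot(\cdots),
\]
of the form \(1-xt = (1-x) + x(1-t)\) or \(t(1-x) + (1-t)=1-xt\) after clearing denominators. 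These match the two branches of \(c_{i,\alpha,\sigma}\), namely \(1\) and \(q^{\leg(u)+1}t^{\arm(u)}\).

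If \(\sigma_i<\sigma_{i+1}\), I would apply the symmetric statement, obtained by swapping the roles of \(T\) and \(\psi(T)\), or by applying \cref{thm:psi} to the inverse direction. This produces the \(x\)-weighted branch.

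\textbf{Main obstacle.} The hard step is the bookkeeping: identifying which of \(\phi(T),\psi(T)\) has the swapped basement in each case, and making the powers of \(t\) from triple counts, the \(\twinv\) difference and \(\operatorname{neg}\) line up exactly. I would first check this on small cases, for instance \(\alpha=(1,0)\), to pin down the signs and exponents.
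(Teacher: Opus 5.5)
Your overall route is the intended one, and for $\sigma_i>\sigma_{i+1}$ it goes through once the bookkeeping below is corrected. The route is: compare $\wt_\pm(T)$ with $\wt_\pm(\phi(T))$ and $\wt_\pm(\psi(T))$ via \cref{thm:phi,thm:psi}, then reduce to $1-xt=(1-x)+x(1-t)$ and $1-xt=t(1-x)+(1-t)$.

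The genuine gap is the case $\sigma_i<\sigma_{i+1}$. \Cref{thm:psi} only relates $T$ to $\psi(T)$ when the bottom pair of $T$, here the basement pair, is inverted, so it says nothing about this case. ``Swapping the roles of $T$ and $\psi(T)$'' is not available either: $\psi(T)$ has shape $s_i\alpha$ with the shorter column on the left, and there neither $\psi$ nor \cref{thm:psi} is defined. As written, you have no relation between the statistics of $T$ and $\psi(T)$ for half of the lemma.

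The missing idea is an auxiliary filling. Let $\tilde T\in\Fill_\pm(\alpha,\sigma s_i)$ be $T$ with its bottommost block swapped. Swapping a block changes neither the block decomposition nor any swap decision:
\begin{itemize}
\item For the bottommost block, both sides of the test $\ind{c, d, g}\neq\ind{e, a, b}$ flip, since $a\neq b$ and $c\neq d$ there.
\item For the block above it, $g$ becomes $h$. Non-intertwinedness, $\ind{c, g, h}=\ind{d, g, h}$, forces $\ind{c, d, g}=\ind{c, d, h}$.
\end{itemize}
Hence $\phi(\tilde T)=\psi(T)$ and $\psi(\tilde T)=\phi(T)$. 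Since $\tilde T$ has inverted basement $(\sigma_{i+1},\sigma_i)$, \cref{thm:psi} applies to $\tilde T$. Combined with \cref{thm:phi} for $T$ and for $\tilde T$, this gives the relations you need. If $\phi(T)$ has basement $\sigma s_i$, then $\inv(\psi(T))=\inv(T)+A$ and $\maj(\psi(T))=\maj(T)-(L+1)$. Otherwise, $\inv(\psi(T))=\inv(T)+1$ and $\maj(\psi(T))=\maj(T)$. Only with these does the branch $q^{\leg(u)+1}t^{\arm(u)}$ of $c_{i,\alpha,\sigma}$ close.

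Several pieces of your bookkeeping are also off.
\begin{itemize}
\item The arm direction is reversed. The arm of $u=(i+1,\alpha_{i+1}+1)\in\dg(s_i\alpha)$ is one \emph{more} than that of $u'=(i,\alpha_{i+1}+1)\in\dg(\alpha)$: its arm set gains $(i,\alpha_{i+1})\in\adg(s_i\alpha)$, and all other arm boxes correspond. So with $x=q^{\leg(u)+1}t^{\arm(u)}$, $\wt_\pm(T)$ carries $1/(1-x)$, while $\wt_\pm(f(T))$ and $\wt_\pm(g(T))$ carry $1/(1-xt)$, as in \cref{example:T-fT-gT}. Your displayed reduction has these the other way around, although the cleared identities you list are the right ones.
\item The two identities do not correspond to the two branches of $c_{i,\alpha,\sigma}$. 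Each branch uses both, according to which of $\phi(T),\psi(T)$ keeps basement $\sigma$.
\item The factor $t^{\twinv(s_i\alpha,\sigma)-\twinv(\alpha,\sigma)}$ does not come from Type~II triples with $w$ in the basement. Those occur in both viewpoints, and their contributions depend on $T(u)$. It comes from the basement-only discrepancy between the augmented $\inv$ used in $\wt_\pm$ and the $\inv$ of the associated non-augmented filling to which \cref{thm:phi,thm:psi} apply (see the remark after \cref{example:T-fT-gT}). Together with the fact that $s_i\alpha$ has exactly one more triple than $\alpha$, this gives precisely the required powers of $t$ in all four sub-cases.
\end{itemize}
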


Using \cref{lemma:T-fT-gT} and summing over all \(T \in \Fill_\pm(\alpha, \sigma)\),  we obtain \eqref{eq:Fill pm} and hence \cref{theorem:three-term}.

\begin{example} \label{example:T-fT-gT}
	For \(\alpha = \composition{3, 2}\), \(\sigma = \window{2, 1}\), and \(i = 1\), we have \(c_{i, \alpha, \sigma}(q, t) = \frac{1 - t}{1 - q^{1} t^{1}}\).
	Below,
	we show a filling \(T\in\Fill_\pm(\alpha,\sigma)\) and its derived fillings \(f(T)\) and \(g(T)\).
	We compute \(\wt_\pm(T) = \frac{x_1^3x_2^2 q^4 t^1 (-t)^3}{(1-qt)P(q, t)}\),
	\(\wt_\pm(f(T)) = \frac{x_1^3x_2^2 q^4 t^2 (-t)^3}{(1-qt^2)P(q, t)}\), and
	\(\wt_\pm(g(T)) = \frac{x_1^3x_2^2 q^4 t^1 (-t)^3}{(1-qt^2)P(q, t)}\),
	where $P(q, t) = {\scriptstyle (1-qt)(1-q^2t)(1-q^2t^2)(1-q^2t^3)}$.
	We confirm \cref{lemma:T-fT-gT} for this \(T\) by checking that
	\begin{equation*}
		\frac{x_1^3x_2^2 q^4 t^1 (-t)^3}{(1-qt)P(q, t)}
		=
		\frac{x_1^3x_2^2 q^4 t^2 (-t)^3}{(1-qt^2)P(q, t)}
		+
		\frac{1 - t}{1 - q^{1} t^{1}}\;
		\frac{x_1^3x_2^2 q^4 t^1 (-t)^3}{(1-qt^2)P(q, t)}.
	\end{equation*}
\end{example}

\begin{center}
	\begin{tikzpicture}[scale=.5]
		\begin{scope}
			\node[anchor=east] at (-.1,3) {$T=$};
			\draw (0, 4) rectangle ++(1, 1);
			\draw (1, 3) rectangle ++(1, 1);
			\draw (0, 3) rectangle ++(1, 1);
			\draw (1, 2) rectangle ++(1, 1);
			\draw (0, 2) rectangle ++(1, 1);
			\draw (1, 1) rectangle ++(1, 1);
			\draw (0, 1) rectangle ++(1, 1);
			\node at (0.5, 4.5) {\(\overline{2}\)};
			\node at (0.5, 3.5) {\(\overline{1}\)};
			\node at (1.5, 3.5) {\(2\)};
			\node at (0.5, 2.5) {\(\overline{1}\)};
			\node at (1.5, 2.5) {\(1\)};
			\node at (0.5, 1.5) {\(2\)};
			\node at (1.5, 1.5) {\(1\)};
		\end{scope}
		\begin{scope}[xshift=10cm]
			\node[anchor=east] at (-.1,3) {$f(T)=\psi(T)=$};
			\draw (1, 4) rectangle ++(1, 1);
			\draw (1, 3) rectangle ++(1, 1);
			\draw (0, 3) rectangle ++(1, 1);
			\draw (1, 2) rectangle ++(1, 1);
			\draw (0, 2) rectangle ++(1, 1);
			\draw (1, 1) rectangle ++(1, 1);
			\draw (0, 1) rectangle ++(1, 1);
			\node at (1.5, 4.5) {\(\overline{2}\)};
			\node at (0.5, 3.5) {\(\overline{1}\)};
			\node at (1.5, 3.5) {\(2\)};
			\node at (0.5, 2.5) {\(1\)};
			\node at (1.5, 2.5) {\(\overline{1}\)};
			\node at (0.5, 1.5) {\(1\)};
			\node at (1.5, 1.5) {\(2\)};
		\end{scope}
		\begin{scope}[xshift=20cm]
			\node[anchor=east] at (-.1,3) {$g(T)=\phi(T)=$};
			\draw (1, 4) rectangle ++(1, 1);
			\draw (1, 3) rectangle ++(1, 1);
			\draw (0, 3) rectangle ++(1, 1);
			\draw (1, 2) rectangle ++(1, 1);
			\draw (0, 2) rectangle ++(1, 1);
			\draw (1, 1) rectangle ++(1, 1);
			\draw (0, 1) rectangle ++(1, 1);
			\node at (1.5, 4.5) {\(\overline{2}\)};
			\node at (0.5, 3.5) {\(\overline{1}\)};
			\node at (1.5, 3.5) {\(2\)};
			\node at (0.5, 2.5) {\(\overline{1}\)};
			\node at (1.5, 2.5) {\(1\)};
			\node at (0.5, 1.5) {\(2\)};
			\node at (1.5, 1.5) {\(1\)};
		\end{scope}
	\end{tikzpicture}
\end{center}

\begin{remark}
	The inversion number for an augmented filling of shape \(\alpha\) with basement \(\sigma\) and the inversion number for its associated non-augmented filling are not the same; they differ by \(\twinv(\alpha,\sigma)\).
	This discrepancy explains why the exponent \(\twinv(s_i \alpha, \sigma) - \twinv(\alpha, \sigma)\) appears in \(c_{i, \alpha, \sigma}(q, t)\).
\end{remark}

\begin{remark}
	Instead of using the signed filling formula of \cref{lem:signed} to prove \cref{theorem:three-term}, one could also attempt to work directly with the non-attacking fillings appearing in the compact expression of \cref{thm:tableau-formula}.
	One approach is to adapt the recursive construction for $\phi$ and $\phi$ from \cite{KLO22}, introducing suitable weights to produce a probabilistic bijection on non-attacking fillings,
	although we do not develop this idea here.
\end{remark}

\section{Straightening} \label{sec:straightening}

\begin{definition}
	For compositions $\alpha$ and $\beta$ that are rearrangements of each other,
	let $\pi_\alpha, \pi_\beta \in\sym{n}$ be the shortest permutations such that
	$\pi_\alpha \cdot \alpha=\inc(\alpha)$ and $\pi_\beta \cdot \beta=\inc(\beta)$.
	Define the \vocab{Bruhat order on compositions} by $\alpha \succ \beta$
	if and only if $\pi_\alpha > \pi_\beta$ in Bruhat order.
\end{definition}

The identity of \cref{theorem:three-term} generalizes to a \emph{straightening rule}, allowing us to expand $E_\alpha^\sigma$ in the polynomials $\{E_{\tau\alpha}^\theta\}_\theta$ whenever $\alpha\succ \tau\alpha$.
Moreover,  we obtain an explicit expression for the coefficients by keeping track of when the first or second term of the right hand side of \eqref{eq:three term} arises.

For a reduced word $\tau=s_{i_k}\cdots s_{i_1}$ and a subset $I=\{j_1,\ldots,j_\ell\}\subseteq [k]$, define  $\tau_I\coloneqq s_{i_{j_\ell}}\cdots s_{i_{j_1}}$ to be the corresponding subword of $\tau$, indexed by $I$.
Define also the $j$-truncated index set $I^{<j}=I\cap [j-1]$ and the $j$-truncated subword $\tau^{<j}_I\coloneqq \tau_{I^{<j}}$ indexed by this set. Notice that this corresponds to the sequence of transpositions indexed by $I$, contained within the first $j-1$ elements of $\tau$, from the right.

\begin{proposition} \label{prop:expansion}
	Let $\alpha$ be a composition and let $\beta=\tau\alpha$ for a reduced word $\tau=s_{i_k}\cdots s_{i_1}$ satisfying $\alpha\succ\beta$. Then,
	\begin{equation*}
		E_\alpha^\sigma(\mathbf{x};q,t)=\sum_{I\subseteq[k]}\Phi(\alpha,\sigma;\tau,I)E_{\beta}^{\sigma\tau_I}(\mathbf{x};q,t),\qquad \Phi(\alpha,\sigma;\tau,I)=\prod_{j\not\in I}\Psi(\alpha^{(j)},\sigma \tau^{<j}_I,i_j).
	\end{equation*}
	where $\Psi(\beta,\rho,i)=c_{i,\beta,\rho}(q,t)$ as given in \cref{theorem:three-term}.
\end{proposition}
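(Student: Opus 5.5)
The plan is to induct on the length \(k\) of the reduced word \(\tau\), applying \cref{theorem:three-term} once for each letter, starting from the rightmost letter \(s_{i_1}\). I read \(\alpha^{(j)}\) as the composition \(s_{i_{j-1}}\cdots s_{i_1}\alpha\), so \(\alpha^{(1)}=\alpha\) and \(\alpha^{(k+1)}=\beta\).

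The first step is to check the hypothesis of \cref{theorem:three-term} at every stage, namely \(\alpha^{(j)}_{i_j}>\alpha^{(j)}_{i_j+1}\) for each \(j\). This should follow from \(\alpha\succ\beta\) together with the reducedness of \(\tau\). Since \(\pi_\beta\tau=\pi_\alpha\) up to the stabilizer of \(\inc(\alpha)\), the minimal-length conditions give \(\ell(\pi_\alpha)=\ell(\pi_\beta)+k\). Hence each step \(\alpha^{(j)}\to\alpha^{(j+1)}\) strictly decreases \(\pi\) in Bruhat order by one length unit. A decrease by exactly one length unit means the step moves a strictly larger part to the right past a strictly smaller one. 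It cannot be an equal-parts swap, because that would leave the composition fixed and contradict the length count. I expect this bookkeeping to be the main obstacle. The convention for \(\pi\) (left action, \(\inc\) weakly increasing) has to be matched carefully to the direction "\(\alpha_i>\alpha_{i+1}\)". If there is a mismatch, I would instead prove directly that the inversion set \(\{(a,b): a<b,\ \alpha_a>\alpha_b\}\) loses exactly one element per step.

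The second step is the inductive expansion. Assume that after processing \(s_{i_1},\dots,s_{i_{j-1}}\) we have
\[E_\alpha^\sigma=\sum_{I\subseteq[j-1]}\Big(\prod_{l\in[j-1]\setminus I}\Psi(\alpha^{(l)},\sigma\tau_I^{<l},i_l)\Big)E_{\alpha^{(j)}}^{\sigma\tau_I}.\]
Apply \cref{theorem:three-term} with \(i=i_j\) and basement \(\rho=\sigma\tau_I\) to each term:
\[E_{\alpha^{(j)}}^{\rho}=E_{\alpha^{(j+1)}}^{\rho s_{i_j}}+\Psi(\alpha^{(j)},\rho,i_j)\,E_{\alpha^{(j+1)}}^{\rho}.\]
The first term corresponds to adding \(j\) to \(I\): it gives \(\sigma\tau_{I\cup\{j\}}\), since \(\tau_I\) is built with later indices on the left, and right-multiplying by \(s_{i_j}\) appends the new letter correctly. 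The second term corresponds to \(j\notin I\) and produces the factor \(\Psi(\alpha^{(j)},\sigma\tau_I^{<j},i_j)\). Here \(\tau_I^{<j}=\tau_I\) for \(I\subseteq[j-1]\), and the truncation also keeps earlier factors unchanged once later indices enter \(I\).

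Setting \(j=k+1\) gives exactly the stated formula with \(\Phi\), since \(\alpha^{(k+1)}=\beta\). One should also observe that the right-hand side of the proposition sums terms possibly with repeated basements \(\sigma\tau_I\). That is harmless, because the identity is just a sum.
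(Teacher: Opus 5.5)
Your Step 2 is the argument the paper intends: iterate \cref{theorem:three-term} letter by letter and record whether the first or the second term is taken. However, Step 1 is false, and the proof depends on it.

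Reducedness of $\tau$ together with $\alpha\succ\beta$ in Bruhat order does not force $\alpha^{(j)}_{i_j}>\alpha^{(j)}_{i_j+1}$ for every $j$. Take $\alpha=[3,1,2]$ and $\beta=[2,1,3]$. Then $\pi_\alpha=s_2s_1>s_1=\pi_\beta$. Since the parts are distinct, the only $\tau$ with $\tau\alpha=\beta$ is $w_0$. Its reduced word $s_1s_2s_1$ gives $[3,1,2]\to[1,3,2]\to[1,2,3]\to[2,1,3]$, whose last step is an ascent. The other reduced word $s_2s_1s_2$ begins with the ascent $[3,1,2]\to[3,2,1]$. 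Here $\ell(\pi_\alpha)=2\neq\ell(\pi_\beta)+k=4$. Minimality of $\pi_\alpha$ only gives $\ell(\pi_\alpha)\le\ell(\pi_\beta\tau)\le\ell(\pi_\beta)+k$. Equality is a weak-order condition (length-additivity of $\pi_\beta\tau$), and Bruhat comparability does not imply it. Your fallback fails on the same example, since the inversion count runs $2,1,0,1$. With repeated parts, a reduced $\tau$ can even swap equal parts, e.g.\ $\alpha=[2,1,1]$, $\tau=s_1s_2$.

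In all these cases $\Psi(\alpha^{(j)},\cdot,i_j)$ is not even defined, because $u=(i_j+1,\alpha^{(j)}_{i_j+1}+1)$ is not a box of $\dg(s_{i_j}\alpha^{(j)})$. So the descent condition at every step, equivalently $\ell(\pi_\alpha)=\ell(\pi_\beta)+k$, must be read as an implicit hypothesis of the proposition. It cannot be derived from the stated hypotheses.

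Second, your basement bookkeeping is inconsistent. Suppose, as you say, $\tau_I=s_{i_{j_\ell}}\cdots s_{i_{j_1}}$ with $j_1<\dots<j_\ell$, so later indices are on the left. Then for $j>j_\ell$ you get $\sigma\tau_I\,s_{i_j}$, which in general is not $\sigma\tau_{I\cup\{j\}}=\sigma s_{i_j}\tau_I$. The iteration actually produces $\sigma s_{i_{j_1}}s_{i_{j_2}}\cdots s_{i_{j_\ell}}$, the chosen letters in the order they are applied. For example, with $\alpha=[3,2,1]$, $\tau=s_1s_2s_1$ and $I=\{1,2\}$, you get $\sigma s_1s_2$, not $\sigma s_2s_1$. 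So you must define $\tau_I$ and $\tau_I^{<j}$ as this product; the paper leaves the order of $j_1,\dots,j_\ell$ unspecified. Otherwise you should write $\sigma\tau_I^{-1}$.

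With the descent hypothesis made explicit and this convention fixed, your induction is a complete proof along the paper's lines.
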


The coefficients in the expansion above are sums of terms of the form
\begin{equation*}
	q^at^b\prod_{(i,j)\in D}\frac{1-t}{1-q^it^j},\qquad a,b\in\nonnegatives,\qquad D\subset\positives^2\,,
\end{equation*}
where $D$ is a finite multiset bounded by the box containing $\dg(\alpha)$. At $t=0$, these coefficients are in $\nonnegatives[q]$, and at $q=0$, the expansion is a non-negative integer linear combination of monomials of the form $t^a(1-t)^b$.
\begin{corollary} \label{cor:E t=0}
	Let $\alpha$ and $\beta$ be a compositions with $\alpha\succ\beta$. Then,
	\begin{equation*}
		E_\alpha^\sigma(\mathbf{x};q,0)=\sum_{\theta}d_{\alpha,\sigma,\theta}(q)E_{\beta}^{\theta}(\mathbf{x};q,0),\qquad d_{\alpha,\sigma,\theta}(q)\in\nonnegatives[q].
	\end{equation*}
\end{corollary}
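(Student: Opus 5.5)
The plan is to specialize \cref{prop:expansion} at $t=0$ and check that each coefficient $\Phi(\alpha,\sigma;\tau,I)$ specializes to an element of $\nonnegatives[q]$. Since $\alpha\succ\beta$ in the Bruhat order on compositions, we can choose a reduced word $\tau=s_{i_k}\cdots s_{i_1}$ with $\beta=\tau\alpha$. Along this word, each intermediate step $\alpha^{(j)}\mapsto s_{i_j}\alpha^{(j)}$ strictly decreases the composition, so $\alpha^{(j)}_{i_j}>\alpha^{(j)}_{i_j+1}$. This is exactly the hypothesis of \cref{theorem:three-term}, so \cref{prop:expansion} applies. The expansion then reads $E_\alpha^\sigma=\sum_{I}\Phi(\alpha,\sigma;\tau,I)E_\beta^{\sigma\tau_I}$, and collecting terms with equal $\theta=\sigma\tau_I$ gives
\[
d_{\alpha,\sigma,\theta}(q)=\sum_{I:\ \sigma\tau_I=\theta}\Phi(\alpha,\sigma;\tau,I)\big|_{t=0}.
\]

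It therefore suffices to show that each factor $c_{i,\gamma,\rho}(q,t)$ from \cref{theorem:three-term} has a well-defined specialization at $t=0$ lying in $\nonnegatives[q]$. Write $c=t^{e}\frac{1-t}{1-q^{\ell+1}t^{A}}\cdot m$, where $e=\twinv(s_i\gamma,\rho)-\twinv(\gamma,\rho)$ and $m\in\{1,q^{\ell+1}t^{A}\}$. The denominator at $t=0$ is either $1$ (when $A>0$) or $1-q^{\ell+1}$ (when $A=0$). In the latter case it expands as the geometric series $\sum_{k\ge0}q^{k(\ell+1)}$, which is nonnegative but is a power series rather than a polynomial. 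So I would either read the conclusion in $\nonnegatives[[q]]$, or show that $A=\arm(u)\ge1$ always holds for $u=(i+1,\gamma_{i+1}+1)\in\dg(s_i\gamma)$. For the latter, the basement/row-below box in column $i$ at height $\gamma_{i+1}$ lies in $\Arm(u)$ because column $i$ of $s_i\gamma$ has height $\gamma_{i+1}<\gamma_i$; hence $A\ge1$. With $A\ge1$, the denominator becomes $1$ at $t=0$, and $c$ specializes to $0^{e}\cdot m|_{t=0}$ provided $e\ge0$.

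The main step is to verify $e\ge0$, since a negative power of $t$ would break the specialization. Swapping $\gamma_i>\gamma_{i+1}$ to get $s_i\gamma$ only changes the status of the pair $(i,i+1)$ in $\twinv$; pairs involving other indices $k$ just have the roles of $i$ and $i+1$ exchanged in both coordinates, so their count is preserved. For the pair $(i,i+1)$ itself, in $\gamma$ it counts iff $\rho_i<\rho_{i+1}$, while in $s_i\gamma$ the condition $\gamma_{i+1}\ge\gamma_i$ fails, so it never counts. Thus $e\in\{0,-1\}$, with $e=-1$ exactly when $\rho_i<\rho_{i+1}$. In that case, however, $m=q^{\ell+1}t^{A}$, and $t^{-1}\cdot t^{A}=t^{A-1}$ with $A\ge1$ is a polynomial in $t$. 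So in every case $c|_{t=0}\in\{0,1,q^{\ell+1}\}\subset\nonnegatives[q]$ (up to the $t^{A-1}$ factor becoming $0$ or $1$). The careful bookkeeping of $\twinv$ is where I expect the real obstacle: the arm count must be at least the deficit in $e$. Once it holds, products and sums of such specializations lie in $\nonnegatives[q]$. The $t=0$ specializations of the $E^\theta_\beta$ also exist, because the weights in \cref{thm:tableau-formula} are regular at $t=0$. Together these give the corollary.
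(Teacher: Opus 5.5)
Your route is the paper's. The paper just notes that the coefficients in \cref{prop:expansion} are sums of terms $q^at^b\prod\frac{1-t}{1-q^it^j}$ with $b\ge 0$ and $j\ge 1$, so they lie in $\nonnegatives[q]$ at $t=0$. Your argument that $A=\arm(u)\ge 1$ correctly supplies $j\ge 1$. The gap is $b\ge 0$, which is exactly the $\twinv$ step you flagged. In $e=\twinv(s_i\gamma,\rho)-\twinv(\gamma,\rho)$ the basement $\rho$ is the \emph{same} on both sides. Swapping $\gamma_i,\gamma_{i+1}$ therefore exchanges the roles of $i$ and $i+1$ only in the composition coordinate, and pairs involving exactly one of $i,i+1$ can change status. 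So $e\in\{0,-1\}$ is false. For $\gamma=(2,0,1)$, $\rho=[1,3,2]$, $i=1$ one has $\twinv(\gamma,\rho)=2$ and $\twinv(s_1\gamma,\rho)=0$, hence $e=-2$; and $\rho=[3,1,2]$ gives $e=1$. Your step ``$t^{-1}t^{A}=t^{A-1}$'' therefore does not show that $c$ is regular at $t=0$.

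The inequality you need does hold, by an exact count. Put $K=\{k<i:\gamma_{i+1}\le\gamma_k<\gamma_i\}\cup\{k>i+1:\gamma_{i+1}<\gamma_k\le\gamma_i\}$. Compare the pairs $(k,i),(k,i+1)$ for $k<i$, the pairs $(i,k),(i+1,k)$ for $k>i+1$, and the pair $(i,i+1)$, and read off $\Arm(u)$ for $u=(i+1,\gamma_{i+1}+1)\in\dg(s_i\gamma)$. This gives
\[
A=1+|K|,\qquad
e=\begin{cases}
|\{k\in K:\rho_{i+1}<\rho_k<\rho_i\}| & \text{if } \rho_i>\rho_{i+1},\\
-1-|\{k\in K:\rho_i<\rho_k<\rho_{i+1}\}| & \text{if } \rho_i<\rho_{i+1}.
\end{cases}
\]
The $\gamma$-conditions governing the change in $\twinv$ are precisely the arm-set conditions. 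Hence $e\ge 0$ when $\rho_i>\rho_{i+1}$, and $A+e\ge 0$ when $\rho_i<\rho_{i+1}$. In the example above, $A=2$ and $c|_{t=0}=q^2$. With this in place, $c|_{t=0}\in\{0,1,q^{\ell+1}\}$ and the rest of your argument goes through.

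Separately, your claim that Bruhat $\alpha\succ\beta$ yields a reduced word each of whose steps sorts a descent is false. Take $\alpha=(2,0,1)$ and $\beta=(1,0,2)$: then $\pi_\alpha=[3,1,2]>[2,1,3]=\pi_\beta$ in Bruhat order. But the only descent-sorting path from $\alpha$ is $(2,0,1)\to(0,2,1)\to(0,1,2)$, which never reaches $\beta$. Iterating \cref{theorem:three-term} needs $\pi_\alpha=\pi_\beta\tau$ with $\ell(\pi_\alpha)=\ell(\pi_\beta)+\ell(\tau)$, a weak-order condition. The paper's statement is equally loose here, so state this as an explicit hypothesis rather than deducing it from $\alpha\succ\beta$.
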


In particular, since we always have $\alpha\succeq\inc(\alpha)$, the straightening rule yields a ``nice'' formula for the expansion of $E_\alpha^\sigma$ in the ASEP polynomials $\{F_\beta\}_{\beta\in \sym{n}(\alpha)}=\{E_{\inc(\alpha)}^\theta\}_{\theta}$, with coefficients in the form described above.
\begin{question}
	Find a combinatorial interpretation for the coefficients $d_{\alpha, \sigma, \beta}(q)$ in \cref{cor:E t=0}.
\end{question}

\printbibliography

\end{document}